%
%
%


\documentclass{mcom-l}





\usepackage{amsmath,arydshln,multirow}

\usepackage{arydshln}
\usepackage{cases}
\usepackage{amsmath}
\usepackage{amsfonts}
\usepackage{bm}
\usepackage{arydshln}
\usepackage{amsfonts,amsmath,amssymb,amscd,bbm,amsthm,mathrsfs,dsfont}
\usepackage{mathrsfs}
\usepackage{pb-diagram}
\usepackage{amssymb}
\usepackage{xypic}
\usepackage[dvips]{graphicx}
\usepackage[all]{xy}
\usepackage{CJK}
\usepackage[dvipsnames]{xcolor}
\usepackage{mathtools}

\newtheorem{theorem}{Theorem}[section]

\newtheorem{proposition}[theorem]{Proposition}
\newtheorem{lemma}[theorem]{Lemma}

\theoremstyle{definition}
\newtheorem{definition}[theorem]{Definition}

\newtheorem{proposition-definition}[theorem]{Proposition-Definition}
\newtheorem{corollary}[theorem]{Corollary}

\theoremstyle{remark}
\newtheorem{remark}[theorem]{Remark}

\numberwithin{equation}{section}

\begin{document}

\title[On some combinatorial properties of generalized cluster algebras]{On some combinatorial properties of \\generalized cluster algebras}


\author{Peigen Cao}
\address{Department
of Mathematics, Zhejiang University (Yuquan Campus), Hangzhou, Zhejiang
310027,  P.R.China}
\curraddr{}
\email{peigencao@126.com}
\thanks{}

\author{Fang Li}
\address{Department
of Mathematics, Zhejiang University (Yuquan Campus), Hangzhou, Zhejiang
310027,  P.R.China}
\curraddr{}
\email{fangli@zju.edu.cn}
\thanks{}

\subjclass[2010]{13F60 }

\date{}

\dedicatory{}

\keywords{Generalized cluster algebra, exchange graph}

\begin{abstract}
In this paper, we prove some combinatorial results on generalized cluster algebras. To be more precisely, we prove that (i) the seeds of a generalized cluster algebra $\mathcal A(\mathcal S)$ whose clusters contain particular cluster variables form a connected subgraph of the exchange graph of $\mathcal A(\mathcal S)$; (ii) there exists a bijection from the set of cluster variables of a generalized cluster algebra to the set of cluster variables of another generalized cluster algebra, if their initial exchange matrices satisfying a mild condition. Moreover, this bijection preserves the set of clusters of these two generalized cluster algebras.

As applications of the second result, we prove some properties of the components of the $d$-vectors of a generalized cluster algebra and we give a characterization for the clusters of a generalized cluster algebra.
\end{abstract}

\maketitle


\section{Introduction}

Cluster algebras were introduced by Fomin and Zelevinsky in \cite{FZ}. The motivation was to create a common framework for the phenomena occurring in connection
with total positivity and canonical bases. Since then, numerous connections between cluster algebras and other branches of mathematics have been discovered, for example, Poisson geometry, discrete dynamical systems, higher Teichm\"uller spaces, representation theory of quivers and finite-dimensional algebras.

Cluster algebras are commutative algebras whose generators and relations are constructed in a recursive manner. The generators of a cluster algebra are called  {\em cluster variables}, which are grouped into overlapping {\em clusters} of the same size. One remarkable feature of cluster algebras is that they have the Laurent phenomenon, which says that for any given cluster ${\bf x}_{t_0}=\{x_{1;t_0},\cdots,x_{n;t_0}\}$, any cluster variable $x_{i;t}$ can be written as a Laurent polynomial in $x_{1;t_0},\cdots,x_{n;t_0}$.

Generalized cluster algebras were introduced in \cite{CS} by Chekhov and Shapiro,
which are the generalization of the classic cluster algebras introduced by Fomin and Zelevinsky
in \cite{FZ}. In the classic case,  a product of cluster variables, one known and one
unknown, is equal to a binomial in other known variables.  These binomial exchange relations are replaced by polynomial exchange relations in generalized cluster algebras.

The generalized cluster structures naturally appear in the Teichm\"{u}ller spaces of Riemann surfaces with orbifold points \cite{CS}, WKB analysis \cite{IN}, representations of quantum affine algebras \cite{G}, Drinfeld double of $GL_n$ \cite{GSV1}. The generalized cluster algebras share many common properties with the classic cluster algebras, for example, the Laurent phenomenon, finite type classification \cite{CS}, tropical dualities phenomenon between $C$-matrices and $G$-matrices \cite{NT}, the existence of greedy bases in rank $2$ case \cite{RD}. One can also refer to \cite{NR,CL}.

In this paper we will provide some other similarity between generalized cluster algebras and classic cluster algebras.  Now we introduce our main results in this paper.
Firstly, we prove that  the seeds of a generalized cluster algebra $\mathcal A(\mathcal S)$ whose clusters contain particular cluster variables form a connected subgraph of the exchange graph of $\mathcal A(\mathcal S)$ (see Theorem \ref{thmgraph}).

Secondly, we prove that there exists a bijection from the set of cluster variables of a generalized cluster algebra to the set of cluster variables of another generalized cluster algebra, if their initial exchange matrices satisfying a mild condition. Moreover, this bijection preserves the set of clusters of these two generalized cluster algebras (see Theorem \ref{promain} for details).

As applications of Theorem \ref{promain}, we prove some properties of the components of the $d$-vectors of a generalized cluster algebra (see Theorem \ref{thmdvec}) and we give a characterization for the clusters of a generalized cluster algebra (see Theorem \ref{thmlast}).

Note that Theorem \ref{thmgraph}, Theorem \ref{thmdvec}, Theorem \ref{thmlast} for classic cluster algebras have been given in \cite{CL1}. These results provide new similarity between generalized cluster algebras and classic cluster algebras.

This paper is organized as follows. In Section 2, some basic definitions, notations and known results are introduced. In Section 3, we give the proof of Theorem Theorem \ref{thmgraph} and Theorem \ref{promain}. In Section 4, we give the applications of Theorem \ref{promain}.

\section{Preliminaries}
\subsection{Generalized cluster algebras}
Recall that $(\mathbb P, \oplus, \cdot)$ is a {\bf semifield} if $(\mathbb P,  \cdot)$ is an abelian multiplicative group endowed with a binary operation of auxiliary addition $\oplus$ which is commutative, associative and satisfies that the multiplication  distributes over the auxiliary addition.

The {\bf tropical semifield} $\mathbb P=Trop(y_1,\cdots,y_m)$ is the free (multiplicative) abelian group generated by $y_1,\cdots,y_m$
with auxiliary addition $\oplus$ defined by
$$\prod\limits_{i}y_i^{a_i}\oplus\prod\limits_{i}y_i^{b_i}=\prod\limits_{i}y_i^{min(a_i,b_i)},$$

The multiplicative group of any semifield $\mathbb P$ is torsion-free \cite{FZ}, hence its group ring $\mathbb Z\mathbb P$ is a domain.
We take an ambient field $\mathcal F$  to be the field of rational functions in $n$ independent variables with coefficients in $\mathbb Z\mathbb P$.

An integer matrix $B_{n\times n}=(b_{ij})$ is  called  {\bf skew-symmetrizable} if there is a positive integer diagonal matrix $S$ such that $SB$ is skew-symmetric, where $S$ is said to be a {\bf skew-symmetrizer} of $B$.

\begin{definition}[Seed and mutation pair]
(i)  A {\bf (labeled) seed}  in $\mathcal F$ is a triple $(B,{\bf x},{\bf y})$, where
\begin{itemize}
\item $B=(b_{ij})$ is an $n\times n$ integer skew-symmetrizable matrix, called an {\bf exchange matrix};

\item  ${\bf x}=(x_1,\dots, x_n)$ is an $n$-tuple such that $X=\{x_1,\dots, x_n\}$ is a free generating set of $\mathcal F$ over $\mathbb{ZP}$. We call ${\bf x}$  the {\bf cluster}  and $x_1,\dots,x_n$ the {\bf cluster variables}  of  $({\bf x},{\bf y},B)$;

\item ${\bf y}=(y_1,\cdots,y_n)$ is an $n$-tuple of elements in $\mathbb P$, where $y_1,\cdots,y_n$ are called {\bf coefficients}.
\end{itemize}

(ii) An {\bf (labeled) mutation pair}  in $\mathcal F$ is pair $(R,{\bf z})$, where
\begin{itemize}
\item $R=diag(r_1,\cdots,r_n)$ is a diagonal integer matrix with $r_i>0$, called a {\bf mutation degree matrix};
\item
${\bf z}=(z_{i,s})_{i=1,\cdots,n;~s=1,\cdots,r_i-1}$ a family of elements in $\mathbb P$ satisfying the reciprocity condition
$$z_{i,s}=z_{i,r_i-s}$$ for $s=1,\cdots,r_i-1$, which are called {\bf frozen coefficients}. In addition, we denote $$z_{i,0}=z_{i,r_i}=1$$ for $i=1,\cdots,n$.
\end{itemize}
\end{definition}
 Each mutation pair $(R, {\bf z})$ naturally corresponds to a collection of polynomials ${\bf Z}=(Z_1,\cdots,Z_n)$, where
 $$Z_i(u)=z_{i,0}+z_{i;1}u+\cdots+z_{i,r_i-1}u^{r_i-1}+z_{i;r_i}u^{r_i}\in\mathbb {ZP}[u].$$
We call $Z_1,\cdots,Z_n$ the {\bf mutation polynomials} of $(R, {\bf z})$.

\begin{definition}[$(R,{\bf z})$-seed mutation]
Let $(R, {\bf z})$ be a mutation pair, and ${\bf Z}=(Z_1,\cdots,Z_n)$ be the collection of mutation polynomials of $(R, {\bf z})$. Let $(B,{\bf x},{\bf y})$ be a seed, we define the {\bf $(R,{\bf z})$-seed mutation} at $k\in\{1,\cdots,n\}$ by $\mu_k(B,{\bf x},{\bf y})=(B^\prime, {\bf x}^\prime,{\bf y}^\prime)$, where

\begin{eqnarray}
b_{ij}^\prime&=&\begin{cases} -b_{ij},&\text{if }i=k\text{ or } j=k;\\b_{ij}+r_k(b_{ik}[-b_{kj}]_++[b_{ik}]_+b_{kj}),&\text{otherwise}.\end{cases}\nonumber\\
x_i^\prime&=&\begin{cases}x_i,&\text{if } i\neq k;\\ x_k^{-1}\left(\prod\limits_{j=1}^nx_j^{[-b_{jk}]_+}\right)^{r_k}\frac{Z_k(\hat y_k)}{Z_k|_{\mathbb P}(y_k)},&\text{if }i=k,\end{cases} \;\;\;\;\;\;\text{where }\hat y_k=y_k\prod\limits_{i=1}^nx_i^{b_{ik}}.\nonumber\\
y_i^\prime&=&\begin{cases} y_k^{-1}~,&\text{if } i=k;\\ y_i\left(y_k^{[b_{ki}]_+}\right)^{r_k}\left(Z_k|_{\mathbb P}(y_k)\right)^{-b_{ki}}~,&\text{if } i\neq k.\nonumber
\end{cases}
\end{eqnarray}
When the mutation degree matrix $R$  is given, we also denote $B^\prime=\mu_k(B)$, which is called the {\bf $R$-mutation} at $k$.
\end{definition}
It can be seen that $\mu_k(B,{\bf x},{\bf y})=(B^\prime, {\bf x}^\prime,{\bf y}^\prime)$ is also a seed.
\begin{remark}
(i) If $(R,{\bf z})=(I_n,\phi)$, the $(R,{\bf z})$-seed mutations are just  the {\bf (classic) seed mutations} by Fomin and Zelevinsky. We will use $\mu_k^\circ$ to denote the (classic) seed mutations.

(ii) On the level of matrix mutations, we can see $$\mu_k(B)R=\mu_k^\circ(BR),$$ where $\mu_k$ is $R$-mutation, and $\mu_k^\circ$ is $I_n$-mutation.
\end{remark}
\begin{proposition}\cite{NR} The $(R,{\bf z})$-seed mutation $\mu_k$ is an involution.
\end{proposition}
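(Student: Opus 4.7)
The plan is to verify separately that $B'' = B$, ${\bf y}'' = {\bf y}$, and ${\bf x}'' = {\bf x}$, where $(B'',{\bf x}'',{\bf y}'') := \mu_k\mu_k(B,{\bf x},{\bf y})$.

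For the exchange matrix, I would reduce immediately to the classical case. The remark records the identity $\mu_k(B)R = \mu_k^\circ(BR)$, and it is standard that $\mu_k^\circ$ is an involution on integer matrices. Applying this twice gives $\mu_k(\mu_k(B))R = \mu_k^\circ(\mu_k(B)R) = \mu_k^\circ(\mu_k^\circ(BR)) = BR$, and invertibility of $R$ yields $B'' = B$. For the coefficient tuple ${\bf y}$, the case $i = k$ is immediate from $y_k' = y_k^{-1}$, while for $i \neq k$ one substitutes $y_i' = y_i (y_k^{[b_{ki}]_+})^{r_k} (Z_k|_{\mathbb P}(y_k))^{-b_{ki}}$ into the mutation formula and uses the sign identity $[b_{ki}]_+ - [-b_{ki}]_+ = b_{ki}$ together with $b_{ki}' = -b_{ki}$; this produces a telescoping cancellation.

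The substantive step is showing $x_k'' = x_k$ (the other $x_i$ are unchanged by $\mu_k$ and hence by $\mu_k\mu_k$, because $b_{ik}' = -b_{ik}$ does not change absolute values and the mutation formula only touches the $k$-th variable). I would first record the key reciprocity identity for the mutation polynomial: since $z_{k,s} = z_{k,r_k-s}$ with $z_{k,0}=z_{k,r_k}=1$, we obtain
\begin{equation*}
Z_k(u) \;=\; u^{r_k}\, Z_k(u^{-1}),
\end{equation*}
and similarly $Z_k|_{\mathbb P}(y_k) = y_k^{r_k} \cdot Z_k|_{\mathbb P}(y_k^{-1})$ in the semifield. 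Next I would compute $\hat y_k' = y_k' \prod_i (x_i')^{b_{ik}'}$ using $y_k' = y_k^{-1}$, $b_{ik}' = -b_{ik}$, $x_i' = x_i$ for $i \neq k$, and the explicit formula for $x_k'$. The factor $\bigl(\prod_j x_j^{[-b_{jk}]_+}\bigr)^{r_k} \tfrac{Z_k(\hat y_k)}{Z_k|_{\mathbb P}(y_k)}$ appearing in $x_k'$ contributes a term that, combined with $\prod_{i\neq k} x_i^{-b_{ik}}$, should simplify to $\hat y_k^{-1}$ times a ratio of $Z_k$ evaluations; the upshot is the clean identity $\hat y_k' = \hat y_k^{-1}$ up to controlled factors.

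Plugging everything into
\begin{equation*}
x_k'' \;=\; (x_k')^{-1} \Bigl(\prod_{j} (x_j')^{[-b_{jk}']_+}\Bigr)^{r_k} \, \frac{Z_k(\hat y_k')}{Z_k|_{\mathbb P}(y_k')},
\end{equation*}
I would use $[-b_{jk}']_+ = [b_{jk}]_+$ and rewrite $\prod_{j\neq k}(x_j)^{[b_{jk}]_+} = \prod_{j\neq k}(x_j)^{[-b_{jk}]_+} \cdot \prod_{j\neq k} x_j^{b_{jk}}$. Then the reciprocity identity $Z_k(\hat y_k') = (\hat y_k')^{r_k} Z_k((\hat y_k')^{-1})$ and its semifield analogue convert $Z_k(\hat y_k')/Z_k|_{\mathbb P}(y_k')$ into the reciprocal of $Z_k(\hat y_k)/Z_k|_{\mathbb P}(y_k)$, up to a monomial factor in $x_1,\dots,x_n,y_k$ that exactly cancels $(x_k')^{-1}\bigl(\prod_j (x_j)^{[b_{jk}]_+}\bigr)^{r_k}$. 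The main obstacle will be bookkeeping in this last calculation: carefully matching the power of $x_k$ contributed by $\hat y_k^{r_k}$ against the $x_k^{-b_{kk}\cdot r_k}$ absorbed by the exchange monomial, and cleanly separating the $\mathbb{ZP}$-part from the cluster-variable part of $\hat y_k$. Once done, $x_k'' = x_k$ falls out.
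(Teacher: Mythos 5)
The paper does not prove this statement---it is quoted from [NR] without proof---so there is no in-paper argument to compare against; your direct verification is the standard one and it is correct. All three checks go through as you describe: the matrix part via $\mu_k(B)R=\mu_k^\circ(BR)$, the coefficient part via $[b_{ki}]_+-[-b_{ki}]_+=b_{ki}$ and the semifield reciprocity $Z_k|_{\mathbb P}(y_k^{-1})=y_k^{-r_k}Z_k|_{\mathbb P}(y_k)$, and the cluster-variable part via $Z_k(u)=u^{r_k}Z_k(u^{-1})$. The one place where you are more cautious than necessary is the computation of $\hat y_k'$: since $b_{kk}=b_{kk}'=0$, the variable $x_k'$ never enters $\hat y_k'=y_k'\prod_i (x_i')^{b_{ik}'}$, so the identity $\hat y_k'=\hat y_k^{-1}$ is exact with no extra factors, and the final cancellation reduces to $[b_{jk}]_+-[-b_{jk}]_+-b_{jk}=0$ for $j\neq k$; the ``bookkeeping obstacle'' you anticipate does not actually arise.
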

Let $\mathbb T_n$ be the  $n$-regular tree, and label the edges of $\mathbb T_n$ by  $1,\dots,n$ such that the $n$  different edges adjacent to the same vertex of $\mathbb T_n$ receive different labels.

\begin{definition}
  (i) An {\bf $(R,{\bf z})$-cluster pattern} ${\mathcal S}$ is an assignment of a seed  $(B_t,{\bf x}_t,{\bf y}_t)$ to every vertex $t$ of the $n$-regular tree $\mathbb T_n$ such that $$(B_t^\prime,{\bf x}_{t}^\prime,{\bf y}_t^\prime)=\mu_{k}(B_t,{\bf x}_t,{\bf y}_t)$$ for any edge $t^{~\underline{\quad k \quad}}~ t^{\prime}$, where $\mu_k$ is the $(R,{\bf z})$-seed mutation at $k$.

  (ii) Let $\mathcal S$ be an $(R,{\bf z})$-cluster pattern, the {\bf $(R,{\bf z})$-cluster algebra} $\mathcal A(\mathcal S)$  (also known as {\bf generalized cluster algebra}) associated with $\mathcal S$ is the $\mathbb {ZP}$-subalgebra of $\mathcal F$ generated by all the cluster variables of $\mathcal S$.
\end{definition}
\begin{remark}
If $(R,{\bf z})=(I_n,\phi)$, the $(R,{\bf z})$-cluster algebras are just the {\bf (classic) cluster algebras} by Fomin and Zelevinsky.
\end{remark}
For the seed $(B_t,{\bf x}_t,{\bf y}_t)$, we always write
$$B_t=(b_{ij}^t),\;\;{\bf x}_t=(x_{1;t},\cdots,x_{n;t}),\;\;{\bf y}_t=(y_{1;t},\cdots,y_{n;t}).$$

\begin{theorem}\cite{CS}
Let $\mathcal A(\mathcal S)$ be an $(R,{\bf z})$-cluster algebra and $(B_{t_0},{\bf x}_{t_0},{\bf y}_{t_0})$ be a seed of $\mathcal A(\mathcal S)$, then any cluster variable $x_{i;t}$ can be written as a Laurent polynomial in $\mathbb {ZP}[x_{1;t_0}^{\pm 1},\cdots,x_{n;t_0}^{\pm 1}]$.
\end{theorem}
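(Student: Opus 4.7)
My plan is to adapt the classical Caterpillar Lemma approach of Fomin–Zelevinsky to the polynomial exchange relations appearing in the generalized setting. The goal is to prove, by induction on the tree-distance $d=d(t_0,t)$ in $\mathbb T_n$, that for every vertex $t$ and every $i$, the cluster variable $x_{i;t}$ lies in $\mathbb{ZP}[x_{1;t_0}^{\pm 1},\ldots,x_{n;t_0}^{\pm 1}]$.

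The base cases are easy. If $d=0$, then $x_{i;t}=x_{i;t_0}$. If $d=1$ along the edge labelled $k$, then for $i\neq k$ we have $x_{i;t}=x_{i;t_0}$, while for $i=k$ the mutation formula
\[
x_{k;t}=x_{k;t_0}^{-1}\Bigl(\prod_{j}x_{j;t_0}^{[-b_{jk}]_+}\Bigr)^{r_k}\frac{Z_k(\hat y_k)}{Z_k|_{\mathbb P}(y_k)}
\]
is manifestly a Laurent polynomial in $\mathbf x_{t_0}$ with coefficients in $\mathbb{ZP}$, since $\hat y_k=y_k\prod_i x_{i;t_0}^{b_{ik}}$ and $Z_k$ has coefficients in $\mathbb{ZP}$.

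For the inductive step, fix $t$ with $d(t_0,t)\geq 2$, and let $t_1$ be the neighbor of $t_0$ on the unique path from $t_0$ to $t$, say along the edge labelled $k$. By the induction hypothesis applied with $t_1$ as the root, $x_{i;t}$ can be written as a Laurent polynomial $L(\mathbf x_{t_1})\in\mathbb{ZP}[x_{1;t_1}^{\pm 1},\ldots,x_{n;t_1}^{\pm 1}]$. Substituting $x_{j;t_1}=x_{j;t_0}$ for $j\neq k$ and the exchange expression for $x_{k;t_1}$ from the previous paragraph yields a rational function $F(\mathbf x_{t_0})$ that is already a Laurent polynomial in $x_{j;t_0}$ for all $j\neq k$; the only possible obstruction is a denominator that is a power of $x_{k;t_0}$ (coming from inverting $x_{k;t_1}$, which has $x_{k;t_0}$ in its numerator). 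The plan is to produce a second expression of $x_{i;t}$ that is regular at $x_{k;t_0}=0$, forcing this power to be trivial.

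To do so I would choose a second neighbor $t_1'$ of $t_0$ along some edge $k'\neq k$ and run the same argument along the caterpillar $t_1'\!-\!t_0\!-\!t_1\!-\cdots-\!t$; by the inductive hypothesis, $x_{i;t}$ is a Laurent polynomial in $\mathbf x_{t_1'}$, and substituting the exchange relation at $t_0$ (now for the index $k'$) produces an expression with potential denominators that are powers of $x_{k';t_0}$ only, i.e.\ regular in $x_{k;t_0}$. Equating the two expressions and using that the ambient field is a field of rational functions in algebraically independent variables, one concludes that $F(\mathbf x_{t_0})$ has no pole along $x_{k;t_0}=0$ either, and is therefore a Laurent polynomial. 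The main obstacle is precisely this cancellation step: one must verify that the numerator $Z_k(\hat y_k)$ appearing in the exchange relation does not vanish identically modulo $x_{k;t_0}$ in a way that would create spurious poles after substitution. This is where the reciprocity condition $z_{i,s}=z_{i,r_i-s}$ together with coprimality of the exchange polynomials at adjacent vertices (in the sense used by Fomin–Zelevinsky, replacing binomials by the degree-$r_k$ polynomials $Z_k$) is crucially used, and extending this coprimality argument from binomials to the polynomials $Z_k$ is the technical heart of the Chekhov–Shapiro proof.
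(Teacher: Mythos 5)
First, a point of comparison: the paper does not prove this statement at all --- it is quoted from Chekhov--Shapiro \cite{CS} --- so there is no in-paper argument to measure you against; your proposal has to be judged against the known proof, namely the Fomin--Zelevinsky caterpillar argument that you are trying to adapt. Your outline names the right strategy but contains a concrete error in the denominator analysis. Write the exchange relation along the edge labelled $k$ joining $t_0$ and $t_1$ as $x_{k;t_1}=u\,P_k/x_{k;t_0}$, where $u=\bigl(Z_k|_{\mathbb P}(y_k)\bigr)^{-1}\in\mathbb P$ is a unit of $\mathbb{ZP}$ and $P_k=\bigl(\prod_{j}x_{j;t_0}^{[-b_{jk}]_+}\bigr)^{r_k}Z_k(\hat y_k)$ is a genuine (non-monomial, in general) polynomial in the $x_{j;t_0}$ with $j\neq k$. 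Substituting this into a Laurent polynomial $L(\mathbf x_{t_1})$ produces denominators of two kinds: powers of $x_{k;t_0}$, coming from \emph{positive} powers of $x_{k;t_1}$ in $L$, and powers of $P_k$, coming from \emph{negative} powers of $x_{k;t_1}$. You assert that the only obstruction is a power of $x_{k;t_0}$ and attribute it to ``inverting $x_{k;t_1}$'', but inverting $x_{k;t_1}$ puts $P_k$, not $x_{k;t_0}$, into the denominator. Eliminating the $P_k$-denominators is exactly where the coprimality of exchange polynomials is needed, and you explicitly defer that step (``the technical heart of the Chekhov--Shapiro proof''), so the argument is not actually carried out.

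Second, your induction does not close. You induct on $d(t_0,t)$ and then want a second expression of $x_{i;t}$ as a Laurent polynomial in $\mathbf x_{t_1'}$ for a neighbor $t_1'$ of $t_0$ lying off the path from $t_0$ to $t$; but $d(t_1',t)=d(t_0,t)+1$, so the inductive hypothesis says nothing about $\mathbf x_{t_1'}$. This is precisely why Fomin--Zelevinsky do not induct on distance to the root: the Caterpillar Lemma establishes Laurentness simultaneously with respect to several suitably placed vertices near the base, under explicit hypotheses on the exchange polynomials (each is independent of its own variable, and adjacent exchange polynomials are coprime after substitution), and one then verifies these hypotheses for the exchange relations at hand. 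To make your proof work in the generalized setting you would need to set up that form of induction and prove the coprimality statements for the degree-$r_k$ polynomials $P_k$ --- using the normalization by $Z_k|_{\mathbb P}(y_k)$ and, where relevant, the reciprocity $z_{k,s}=z_{k,r_k-s}$. As it stands, what you have is a correct plan with its two essential steps (the denominator bookkeeping and the coprimality lemma) either misstated or missing.
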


Let $\mathcal A(\mathcal S)$ be an $(R,{\bf z})$-cluster algebra, ${\bf x}_t$ and ${\bf x}_{t_0}$ be two clusters of $\mathcal A(\mathcal S)$. We know that each $x_{i;t}$ can be viewed as a rational functions in $x_{1;t_0},\cdots,x_{n;t_0}$ with coefficients in $\mathbb {ZP}$, so we can define the corresponding Jacobi matrix as follows.
$$J^t_{t_0}=\begin{pmatrix} \frac{\partial x_{1;t}}{\partial x_{1;t_0}}&\frac{\partial x_{2;t}}{\partial x_{1;t_0}}&\cdots &\frac{\partial x_{n;t}}{\partial x_{1;t_0}}\\ \frac{\partial x_{1;t}}{\partial x_{2;t_0}}&\frac{\partial x_{2;t}}{\partial x_{2;t_0}}&\cdots &\frac{\partial x_{n;t}}{\partial x_{2;t_0}}\\ \vdots &\vdots& &\vdots\\ \frac{\partial x_{1;t}}{\partial x_{n;t_0}}&\frac{\partial x_{2;t}}{\partial x_{n;t_0}}&\cdots &\frac{\partial x_{n;t}}{\partial x_{n;t_0}} \end{pmatrix}.$$
Let $H^t_{t_0}=diag(x_{1;t_0},\cdots, x_{n;t_0})J^t_{t_0}diag(x_{1;t}^{-1}, \cdots, x_{n;t}^{-1})$, which is called the {\bf $H$-matrix} of ${\bf x}_t$ with respect to ${\bf x}_{t_0}$.

\begin{theorem}[Cluster formula \cite{CL}]
Let $\mathcal A(\mathcal S)$ be an $(R,{\bf z})$-cluster algebra,  $(B_t,{\bf x}_t,{\bf y}_t)$ and $(B_{t_0}$,${\bf x}_{t_0},{\bf y}_{t_0})$ be two seeds of $\mathcal A(\mathcal S)$. Then we have
 $$H_{t_0}^{t}(B_tR^{-1}S^{-1}) (H_{t_0}^{t})^{\rm T}=B_{t_0}R^{-1}S^{-1}\;\;\;\;\;\;\; \text{and}\;\;\;\;\;\;\; det(H_{t_0}^{t})=\pm 1,$$
where $S$ is a skew-symmetrizer of $RB_{t_0}$.
\end{theorem}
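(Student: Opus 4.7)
The plan is to prove both assertions simultaneously by induction on the tree distance $d(t_0, t)$, reducing everything to the case of a single mutation. The base case $t = t_0$ is immediate since $H^{t_0}_{t_0} = I_n$. For the inductive step, the key observation is the composability of $H$-matrices: the chain rule for Jacobians gives $J^{t''}_{t_0} = J^t_{t_0}\, J^{t''}_t$ whenever $t$ lies on the path from $t_0$ to $t''$, and conjugating by the diagonal matrix $D_t = \operatorname{diag}(x_{1;t}, \dots, x_{n;t})$ yields
\begin{equation*}
H^{t''}_{t_0} = H^t_{t_0}\, H^{t''}_t.
\end{equation*}
Under this multiplicativity, both the matrix identity and the claim $\det(H^t_{t_0}) = \pm 1$ propagate one edge at a time, so it is enough to verify them for adjacent seeds $t$ and $t' = \mu_k(t)$.

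For a single mutation, I would compute $H^{t'}_t$ explicitly. Since $x_{i;t'} = x_{i;t}$ for $i \ne k$, the matrix $H^{t'}_t$ coincides with $I_n$ outside its $k$-th column. Differentiating the exchange relation for $x_{k;t'}$ and using $b_{kk}^t = 0$ (which kills $\partial \hat y_k/\partial x_{k;t}$ so that only the factor $x_{k;t}^{-1}$ contributes to $\partial x_{k;t'}/\partial x_{k;t}$), one obtains
\begin{equation*}
(H^{t'}_t)_{ik} = r_k[-b_{ik}^t]_+ + b_{ik}^t\, L \quad (i \ne k), \qquad (H^{t'}_t)_{kk} = -1,
\end{equation*}
where $L = \hat y_k\, Z_k'(\hat y_k)/Z_k(\hat y_k)$. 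Hence $H^{t'}_t = I_n + (w - e_k)e_k^{\rm T}$ for an explicit vector $w$, and the rank-one-update formula gives $\det(H^{t'}_t) = w_k = -1$, settling the determinant statement in the base case.

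Substituting this $H^{t'}_t$ together with $B_{t'} = \mu_k(B_t)$ into the matrix identity, I would check it entry by entry. Entries involving row or column $k$ reduce to the sign flips $b_{ik}^{t'} = -b_{ik}^t$, $b_{kj}^{t'} = -b_{kj}^t$ from the mutation rule. For $i, j \ne k$, the $k$-th column contribution splits into an $r_k$-piece and an $L$-piece; the latter is proportional to $b_{ik}^t\bigl(b_{jk}^t r_j s_j/(r_k s_k) + b_{kj}^t\bigr)$, which vanishes thanks to the skew-symmetry of $B_t R^{-1} S^{-1}$ (a property of $B_{t_0}$ that is preserved under mutation, since $SR$ is a skew-symmetrizer of every $B_t$ in the mutation class). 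The surviving $r_k$-piece then rearranges into the correction $r_k\bigl(b_{ik}^t [-b_{kj}^t]_+ + [b_{ik}^t]_+ b_{kj}^t\bigr)$ via the elementary identity $[x]_+ - x = [-x]_+$. The main obstacle, in my view, is precisely this cancellation of the $L$-terms: $H^{t'}_t$ has transcendental entries in the initial data, so a priori it is not clear that $H^{t'}_t B_{t'} R^{-1} S^{-1} (H^{t'}_t)^{\rm T}$ should even be rational, let alone integral, and the underlying algebraic mechanism is the skew-symmetry of $B_t R^{-1} S^{-1}$, which must first be established and propagated along the tree.
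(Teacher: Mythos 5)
This theorem is not proved in the paper at all: it is imported verbatim from the reference [CL] (Cao--Li, \emph{J. Algebra} 493 (2018)), so there is no in-paper proof to compare against. Your argument is correct and is essentially the proof given in that reference: decompose $H^t_{t_0}$ along a mutation path via the chain rule, compute the single-step $H$-matrix as a rank-one perturbation of $I_n$ (giving $\det=-1$), and verify the one-step identity entrywise, with the transcendental $L$-terms cancelling by skew-symmetry of $B_tR^{-1}S^{-1}$ and the remaining terms matching via $[x]_+-[-x]_+=x$ together with $r_k[b^t_{kj}]_+/(r_js_j)=[-b^t_{jk}]_+/s_k$. The one point you rightly flag as needing separate justification --- that $S$ remains a skew-symmetrizer of $RB_t$ along the whole tree --- is indeed required at every inductive step and follows from the compatibility $\mu_k(B)R=\mu_k^{\circ}(BR)$ together with the standard invariance of skew-symmetrizers under mutation; with that supplied, the proof is complete.
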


\subsection{$D$-matrices and exchange graph}
By the Laurent phenomenon, each cluster variable can be written as
\begin{eqnarray}
x_{i;t}=\frac{f(x_{1;t_0},\cdots,x_{n,t_0})}{x_{1;t_0}^{d_1}\cdots x_{n;t_0}^{d_n}},\nonumber
\end{eqnarray}
 where $f$ is a polynomial in $x_{1;t_0},\cdots,x_{n;t_0}$ with coefficients in $\mathbb {ZP}$ with $x_{j;t_0}\nmid f$ for any $j=1,\cdots,n$.
The vector ${\bf d}_{i;t}^{t_0}=(d_1,\cdots,d_n)^{\rm T}\in\mathbb Z^n$ is called the {\bf $d$-vector} of $x_{i;t}$ with respect to ${\bf x}_{t_0}$. The matrix
$D_t^{t_0}=({\bf d}_{1;t}^{t_0},\cdots,{\bf d}_{n;t}^{t_0})$ is called the {\bf $D$-matrix} of ${\bf x}_t$ with respect to ${\bf x}_{t_0}$.

\begin{proposition}\cite{CL}\label{prodmut}
Let $\mathcal A(\mathcal S)$ be an  $(R,{\bf z})$-cluster algebra with  initial seed at $t_0$.  Then the $D$-matrix $D_t^{t_0}=({\bf d}_{1;t}^{t_0},\cdots,{\bf d}_{n;t}^{t_0})$ is uniquely determined by the initial condition $D_{t_0}^{t_0}=-I_n$, together with the following relation:
\begin{equation}\label{mutationD}
{\bf d}_{j;t^\prime}^{t_0}=\begin{cases}{\bf d}_{j;t}^{t_0}  & \text{if } j\neq k;\\ -{\bf d}_{k;t}^{t_0}+max\{\sum\limits_{b_{lk}^t>0}{\bf d}_{l;t}^{t_0} b_{lk}^tr_k, \sum\limits_{b_{lk}^t<0} -{\bf d}_{l;t}^{t_0}b_{lk}^tr_k\}  &\text{if } j=k.\end{cases}\nonumber
 \end{equation}
 for any $t,t'\in\mathbb T_n$ with edge $t^{~\underline{\quad k \quad}} ~t^{\prime}$.
\end{proposition}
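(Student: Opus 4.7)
The plan is to verify the two defining ingredients of the proposition separately: the initial value $D_{t_0}^{t_0}=-I_n$, and the mutation recursion along every edge of $\mathbb T_n$; uniqueness then follows from the tree structure. The initial value is immediate, since as an element of $\mathcal F$ we have $x_{i;t_0}=1/x_{i;t_0}^{-1}$, and the requirement that the numerator be a polynomial not divisible by any $x_{j;t_0}$ forces $\mathbf d_{i;t_0}^{t_0}=-e_i$. The case $j\ne k$ of the recursion is also immediate: the mutation formula gives $x_{j;t'}=x_{j;t}$ as elements of $\mathcal F$, hence their $d$-vectors coincide.

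For the substantive case $j=k$ I would start from the exchange relation
\[
x_{k;t}\,x_{k;t'}
=\Bigl(\prod_{l=1}^{n}x_{l;t}^{[-b_{lk}^t]_+}\Bigr)^{r_k}
\frac{Z_k(\hat y_k)}{Z_k|_{\mathbb P}(y_k)},
\]
expand $Z_k(\hat y_k)=\sum_{s=0}^{r_k}z_{k,s}\hat y_k^s$, and substitute the Laurent expansions $x_{l;t}=P_l\,\mathbf x_{t_0}^{-\mathbf d_{l;t}^{t_0}}$ with each $P_l\in\mathbb{ZP}[\mathbf x_{t_0}]$ coprime to every $x_{i;t_0}$. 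In the $s$-th summand the exponent of $x_{l;t}$ is $e_l(s):=r_k[-b_{lk}^t]_+ + s\,b_{lk}^t$, so its $\mathbf x_{t_0}$-monomial part is $\mathbf x_{t_0}^{-\sum_l e_l(s)\mathbf d_{l;t}^{t_0}}$. The key observation is that $e_l(s)$ is affine in $s$; hence each coordinate of $-\sum_l e_l(s)\mathbf d_{l;t}^{t_0}$ is an affine function on $\{0,\dots,r_k\}$, whose minimum is therefore attained at an endpoint. A direct computation identifies the two endpoint values as $-M_-$ (at $s=0$) and $-M_+$ (at $s=r_k$), yielding coordinate-wise minimum $-\max(M_+,M_-)$. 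Combining with the factor $x_{k;t}^{-1}=\mathbf x_{t_0}^{\mathbf d_{k;t}^{t_0}}/P_k$ and invoking the Laurent phenomenon to clear $P_k$, one obtains $x_{k;t'}=N\cdot\mathbf x_{t_0}^{\mathbf d_{k;t}^{t_0}-\max(M_+,M_-)}$ for some $N\in\mathbb{ZP}[\mathbf x_{t_0}]$.

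The principal obstacle is the non-cancellation step: one must show $x_{i;t_0}\nmid N$ for every $i$, for only then does the exponent $-\mathbf d_{k;t}^{t_0}+\max(M_+,M_-)$ coincide with $\mathbf d_{k;t'}^{t_0}$. By linearity in $s$, for each coordinate $i$ the $s$-values realising the coordinate-wise maximum form a subinterval of $\{0,\dots,r_k\}$, and the plan is to show that the sum of the corresponding \emph{active} summands is nonzero modulo $x_{i;t_0}$. This uses that each $P_l\bmod x_{i;t_0}$ is a nonzero element of $\mathbb{ZP}[\mathbf x_{t_0}]/(x_{i;t_0})$ and that the distinct coefficients $z_{k,s}y_k^s$ entering the sum (with strictly different $s$-degrees in $\hat y_k$) cannot collectively cancel. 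This is the technical crux and is the $r_k>1$ analogue of the classical Fomin--Zelevinsky non-cancellation argument for the $d$-vector recursion in ordinary cluster algebras.
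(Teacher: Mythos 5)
First, a caveat: the paper contains no proof of Proposition \ref{prodmut} --- it is quoted verbatim from \cite{CL} --- so there is no in-paper argument to compare you against; I can only judge your proposal against what a complete proof must contain. Your architecture is the natural one: the initial condition and the case $j\neq k$ are indeed immediate, and for $j=k$ your key observation --- that the exponent $e_l(s)=r_k[-b^t_{lk}]_++s\,b^t_{lk}$ is affine in $s$, so the componentwise minimum over the $r_k+1$ summands of $Z_k(\hat y_{k;t})$ of the candidate denominator exponents is attained at $s=0$ or $s=r_k$ and equals $-\max(M_+,M_-)$ with $M_+=\sum_{b^t_{lk}>0}{\bf d}_{l;t}^{t_0}b^t_{lk}r_k$ and $M_-=\sum_{b^t_{lk}<0}(-{\bf d}_{l;t}^{t_0})b^t_{lk}r_k$ --- is correct and is exactly the right way to collapse an $(r_k+1)$-term exchange polynomial to the two-term maximum appearing in the statement. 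That is the genuinely new feature of the generalized ($r_k>1$) setting, and you have it.

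The genuine gap is the non-cancellation step. You correctly identify it as the crux, but the mechanism you propose does not close it. You argue that the prefactors $z_{k,s}y_{k;t}^{s}$ are distinct and hence the ``active'' summands cannot collectively vanish modulo $x_{i;t_0}$. This fails for a general coefficient semifield: with trivial coefficients every $z_{k,s}y_{k;t}^{s}$ equals $1$, so distinctness carries no information, and even with principal coefficients the generators of $\mathbb P$ also occur inside the numerators $P_l$, so these prefactors do not separate the summands. More fundamentally, knowing only that each $P_l \bmod x_{i;t_0}$ is a nonzero element of $\mathbb{ZP}[{\bf x}_{t_0}]/(x_{i;t_0})$ does not prevent a $\mathbb{ZP}$-linear combination of products of them from vanishing, since the $P_l$ are not known to have nonnegative coefficients (a positivity theorem for generalized cluster algebras would be a far heavier input than the proposition itself, and is not invoked anywhere in this circle of ideas). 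In the ``tied'' coordinates $i$ where $(M_+)_i=(M_-)_i$, i.e.\ $\sum_l b^t_{lk}(d^{t_0}_{l;t})_i=0$, all $r_k+1$ summands are active and your argument yields only the componentwise inequality ${\bf d}_{k;t'}^{t_0}\le -{\bf d}_{k;t}^{t_0}+\max(M_+,M_-)$; proving equality there is the entire content of the proposition and is exactly what must be supplied from \cite{CL} (note you cannot borrow Theorem \ref{thmdposi}-type information about the signs of the $(d^{t_0}_{l;t})_i$ at this stage, since in this paper those facts are deduced \emph{from} Proposition \ref{prodmut} via Corollary \ref{cordmat}). As written, your final paragraph restates the difficulty rather than resolving it.
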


The following result is a direct corollary of Proposition \ref{prodmut}.
\begin{corollary}\label{cordmat}
Let $\mathcal A(\mathcal S)$ be an  $(R,{\bf z})$-cluster algebra with  initial seed  $(B_{t_0},{\bf x}_{t_0},{\bf y}_{t_0})$, and $\mathcal A(\overline{\mathcal S})$ be an  $(\overline R,\overline{\bf z})$-cluster algebra with  initial seed  $(\overline B_{t_0},\overline{\bf x}_{t_0},\overline{\bf y}_{t_0})$. If $B_{t_0}R=\overline B_{t_0}\overline R$, then for any two vertices $w,v\in\mathbb T_n$, we have
$$D_v^w=\overline D_v^w,$$
where $D_v^w$ is the $D$-matrix  of  $\mathcal A(\mathcal S)$ and $\overline D_v^w$ is the $D$-matrix  of  $\mathcal A(\overline{\mathcal S})$.
\end{corollary}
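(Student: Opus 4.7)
The key observation is that Proposition \ref{prodmut} expresses the mutation rule for $d$-vectors purely in terms of the entries $b_{lk}^t r_k$, which are exactly the $(l,k)$-entries of the product matrix $B_t R$. Thus the entire recursive system defining $D_v^w$ depends only on the matrices $B_t R$ together with the initial condition $D_w^w = -I_n$. The plan is therefore a straightforward two-step induction.

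First I would prove the auxiliary claim that $B_t R = \overline{B}_t \overline{R}$ for every $t \in \mathbb{T}_n$. The base case $t = t_0$ is the hypothesis. For the induction step, using the identity $\mu_k(B)R = \mu_k^\circ(BR)$ recorded in the Remark following the definition of $(R,{\bf z})$-seed mutation, one has
\[
\mu_k(B_t)R \;=\; \mu_k^\circ(B_t R) \;=\; \mu_k^\circ(\overline{B}_t \overline{R}) \;=\; \mu_k(\overline{B}_t)\overline{R},
\]
so the property propagates along every edge of $\mathbb{T}_n$.

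Second, fix an arbitrary vertex $w \in \mathbb{T}_n$ and apply Proposition \ref{prodmut} using $w$ as the initial vertex (rather than $t_0$). The initial $D$-matrices agree since $D_w^w = -I_n = \overline{D}_w^w$. For the inductive step along an edge $t^{~\underline{\quad k \quad}}~t'$, the non-trivial $k$-th column is
\[
\mathbf{d}_{k;t'}^{w} \;=\; -\mathbf{d}_{k;t}^{w} + \max\!\Bigl\{\sum_{b_{lk}^t > 0}\mathbf{d}_{l;t}^{w}\, b_{lk}^t r_k,\; \sum_{b_{lk}^t < 0} -\mathbf{d}_{l;t}^{w}\, b_{lk}^t r_k\Bigr\},
\]
and the coefficients $b_{lk}^t r_k$ that enter this formula are precisely the entries of $B_t R$. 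By the first step these coincide with the corresponding entries of $\overline{B}_t \overline{R}$, so by induction hypothesis on $D_t^w = \overline{D}_t^w$ the right-hand side is unchanged when passing to the barred algebra. The columns with $j \neq k$ are trivially preserved, so $D_{t'}^w = \overline{D}_{t'}^w$ and the induction on the distance from $w$ in $\mathbb{T}_n$ closes.

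There is no real obstacle here: the only point that deserves care is making sure one uses the right parameter in the mutation rule, namely that $R$ appears in the combination $b_{lk}^t r_k$, so that the $D$-matrix genuinely sees only the product $B_t R$. Once the invariance $B_t R = \overline{B}_t \overline{R}$ is established via the matrix identity $\mu_k(B)R = \mu_k^\circ(BR)$, the rest is a routine double induction over $\mathbb{T}_n$.
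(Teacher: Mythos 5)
Your proof is correct and is exactly the argument the paper has in mind: the paper offers no written proof, merely asserting the corollary is "direct" from Proposition \ref{prodmut}, and your two steps --- propagating $B_tR=\overline B_t\overline R$ along $\mathbb T_n$ via the identity $\mu_k(B)R=\mu_k^\circ(BR)$, then observing that the $D$-matrix recursion sees only the entries $b_{lk}^tr_k$ of $B_tR$ together with the initial condition $D_w^w=-I_n$ --- are precisely the details being suppressed. No issues.
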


Let $\mathcal A(\mathcal S)$ be an  $(R,{\bf z})$-cluster algebra with initial seed at $t_0$. Let $(B_{t_1},{\bf x}_{t_1},{\bf y}_{t_1})$ and $(B_{t_2},{\bf x}_{t_2},{\bf y}_{t_2})$ be two seeds of $\mathcal A(\mathcal S)$. We say that the two seeds $(B_{t_1},{\bf x}_{t_1},{\bf y}_{t_1})$ and $(B_{t_2},{\bf x}_{t_2},{\bf y}_{t_2})$ are {\bf equivalent} if there exists a permutation $\sigma$ of $\{1,\cdots,n\}$ such that $$x_{i;t_2}=x_{\sigma(i);t_1},\;y_{i;t_2}=y_{\sigma(i);t_1},\;b_{ij}^{t_2}=b_{\sigma(i)\sigma(j)}^{t_1},$$
for any $i,j=1,\cdots,n$.

\begin{definition}
Let $\mathcal A(\mathcal S)$ be an  $(R,{\bf z})$-cluster algebra, the {\bf  exchange graph} ${\bf EG}(\mathcal A(\mathcal S))$ of $\mathcal A(\mathcal S)$  is  a graph satisfying that
\begin{itemize}
\item the set of vertices of ${\bf EG}(\mathcal A(\mathcal S))$ is in bijection with the set of  seeds (up to equivalence) of $\mathcal A(\mathcal S)$;
    \item two vertices joined by an edge if and only if the corresponding two seeds (up to equivalence) are obtained from each other by once mutation.
\end{itemize}
\end{definition}

\subsection{Generalized cluster algebras with principal coefficients}
Now we give the definition of principal coefficients $(R,{\bf z})$-cluster algebra.
Let us temporarily regard ${\bf y}=(y_1,\cdots,y_n)$, and ${\bf z}=(z_{i,s})_{i=1,\cdots,n;~s=1,\cdots,r_i-1}$
 with $z_{i,s}=z_{i,r_i-s}$ as formal variables. Let $\mathbb P_{pr}:=Trop({\bf y},{\bf z})$
be the tropical semifield of ${\bf y}$ and ${\bf z}$, and $\mathcal F_{pr}$  be the field of rational functions in $n$ independent variables with coefficients in $\mathbb Z\mathbb P_{pr}$.

\begin{definition}
 An $(R,{\bf z})$-cluster algebra  $\mathcal A(\mathcal S)$ in $\mathcal F_{pr}$ is said to be with {\bf principal coefficients} at $t_0$, if ${\bf y}_{t_0}={\bf y}$.
\end{definition}

Let $\mathcal A(\mathcal S)$ be an $(R,{\bf z})$-cluster algebra with principal coefficients at $t_0$. By the Laurent phenomenon, each cluster variable $x_{i;t}$ can be expressed as $$X_{i;t}({\bf x}_{t_0},{\bf y},{\bf z})=\mathbb {ZP}_{pr}[{\bf x}_{t_0}^{\pm1}]=\mathbb Z[{\bf x}_{t_0}^{\pm1},{\bf y}^{\pm1},{\bf z}^{\pm 1}].$$ We call $X_{i;t}$ the {\bf $X$-function} of $x_{i;t}$.

\begin{proposition}\cite{NT}
Each $X$-function $X_{i;t}$ is a Laurent polynomial in $\mathbb Z[{\bf x}_{t_0}^{\pm1},{\bf y},{\bf z}]$.
\end{proposition}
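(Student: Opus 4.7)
The plan is to adapt the $F$-polynomial and separation-of-additions technique of Fomin and Zelevinsky (Cluster Algebras IV) to the generalized setting, proving the claim by induction on the distance $d(t_0, t)$ in $\mathbb{T}_n$.

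First I would define the $F$-polynomial $F_{i;t}({\bf y}, {\bf z}) := X_{i;t}({\bf x}_{t_0}, {\bf y}, {\bf z})|_{x_{1;t_0}=\cdots=x_{n;t_0}=1}$; by the Laurent phenomenon it is a priori only an element of $\mathbb{Z}[{\bf y}^{\pm 1}, {\bf z}^{\pm 1}]$. The core of the argument is to prove simultaneously by induction on $d(t_0, t)$ that \emph{(a)} $F_{i;t}({\bf y}, {\bf z})$ is in fact a polynomial in ${\bf y}, {\bf z}$ (with nonnegative integer coefficients) having constant term $1$, and \emph{(b)} the separation formula
\[
X_{i;t}({\bf x}_{t_0}, {\bf y}, {\bf z}) \;=\; \frac{F_{i;t}(\hat{\bf y}, {\bf z})}{F_{i;t}|_{\mathbb{P}_{pr}}({\bf y}, {\bf z})} \prod_{j=1}^{n} x_{j;t_0}^{g_{ji}^t}
\]
holds, where $\hat{y}_j = y_j \prod_i x_{i;t_0}^{b_{ij}^{t_0}}$ and the $g$-vectors $g_{ji}^t$ are defined by the standard recursion with initial value at $t_0$.

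Given (a) and (b), the statement follows immediately: since $F_{i;t}$ is polynomial with constant term $1$, the tropical evaluation $F_{i;t}|_{\mathbb{P}_{pr}}({\bf y}, {\bf z})$ is the coordinatewise minimum of the exponent vectors of its monomials and is therefore pinned to $1$ by the constant-term monomial. Hence the denominator disappears and $X_{i;t} = F_{i;t}(\hat{\bf y}, {\bf z}) \prod_j x_{j;t_0}^{g_{ji}^t}$. Each $\hat{y}_j$ is a Laurent monomial in ${\bf x}_{t_0}$ multiplied by the single variable $y_j$ to a positive power, so the substitution ${\bf y} \mapsto \hat{\bf y}$ preserves nonnegativity in ${\bf y}$ and does not touch ${\bf z}$; combined with polynomiality of $F_{i;t}$ in ${\bf y}, {\bf z}$, this places $X_{i;t}$ into $\mathbb{Z}[{\bf x}_{t_0}^{\pm 1}, {\bf y}, {\bf z}]$.

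For the inductive step, one applies the $(R, {\bf z})$-seed mutation at an edge labeled $k$ joining $t$ to an adjacent $t'$, substitutes the inductive form (b) of the $X_{j;t}$'s into the exchange relation, and extracts the induced mutation recursions for the $F$-polynomials and for the $g$-vectors. The numerator $Z_k(\hat{y}_{k;t})$ carries nonnegative frozen coefficients $z_{k,s}$, which is what drives positivity; the real obstacle is that $F_{k;t'}$ naturally appears as a rational function with $F_{k;t}$ in the denominator, so one must prove the exact divisibility by $F_{k;t}$ of a specific polynomial in the remaining $F_{j;t}$'s and in $Z_k$ evaluated at a certain monomial argument, together with the fact that the quotient still has constant term $1$. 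This divisibility is the generalized analog of Fomin--Zelevinsky's Conjecture $5.4$ for classical cluster algebras, and in the generalized setting it is exactly the content of the results in \cite{NT}; it is the main technical hurdle of the whole approach.
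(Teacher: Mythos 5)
The paper offers no proof of this proposition---it is quoted verbatim from \cite{NT}---so there is nothing internal to compare against; judged on its own, your outline has a genuine gap at its central step. You reduce the statement to (a) polynomiality (plus constant term $1$ and positivity) of the $F$-polynomials and (b) the separation formula, and the final deduction from (a) and (b) is sound (indeed polynomiality of $F_{i;t}$ alone suffices: the tropical evaluation $F_{i;t}|_{\mathbb P_{pr}}$ is a monomial with nonnegative exponents dividing every monomial of $F_{i;t}$, so the quotient stays polynomial in ${\bf y},{\bf z}$ after substituting $\hat{\bf y}$; the constant-term-$1$ claim is a strictly harder statement, equivalent to sign-coherence of $c$-vectors, and is not needed). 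But the inductive step for (a) is exactly where all the content lies: the mutation recursion presents $F_{k;t'}$ as a fraction with $F_{k;t}$ in the denominator, and you concede that the required exact divisibility ``is exactly the content of the results in \cite{NT}.'' Since the proposition you are asked to prove \emph{is} the cited result from \cite{NT}, deferring the decisive step to \cite{NT} makes the argument circular; as written, nothing is established. The parenthetical positivity of the $F$-polynomial coefficients is an additional assertion at the level of the positivity conjecture and would not survive this induction either.

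The proof in the literature runs in the opposite logical direction, and that is the route you should take if you want a self-contained argument. In \cite{FZ3} (Prop.~3.6) and in Nakanishi's adaptation to the generalized setting, one proves $X_{i;t}\in\mathbb Z[{\bf x}_{t_0}^{\pm1},{\bf y},{\bf z}]$ \emph{directly}, by re-running the Laurent-phenomenon (``caterpillar'') argument while tracking that, in the principal-coefficient tropical semifield, the normalized exchange polynomials $Z_k(\hat y_{k;t})/Z_k|_{\mathbb P}(y_{k;t})$ contribute only nonnegative powers of ${\bf y}$ and ${\bf z}$ and that the relevant coprimality in the Laurent ring is preserved. Polynomiality of the $F$-polynomials in ${\bf y},{\bf z}$ is then an immediate corollary obtained by specializing $x_{1;t_0}=\cdots=x_{n;t_0}=1$, rather than an input; your plan instead requires first proving the divisibility in the $F$-polynomial recursion, which is harder than the proposition itself.
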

The {\bf $F$-polynomial} $F_{i;t}$ of $x_{i;t}$ is defined by
$F_{i;t}=X_{i;t}|_{x_{1;t_0}=\cdots=x_{n;t_0}=1}\in\mathbb Z[{\bf y},{\bf z}]$.

Let $\mathcal A(\mathcal S)$ be an $(R,{\bf z})$-cluster algebra with principal coefficients at $t_0$, we introduce a $\mathbb Z^n$-grading on $\mathbb Z[{\bf x}_{t_0}^{\pm1},{\bf y},{\bf z}]$ as follows:
$$deg(x_{i;t_0})={\bf e}_i,~~deg(y_i)=-{\bf b}_i,~~deg(z_{i;s})=0,$$
where  ${\bf e}_i$ is the $i$-th column vector of $I_n$, and ${\bf b}_i$ is the $i$-th column vector of $B_{t_0}$.

\begin{proposition}\cite{NT}
Each $X$-function $X_{i;t}$ is homogeneous with respect to the $\mathbb Z^n$-grading on $\mathbb Z[{\bf x}_{t_0}^{\pm1},{\bf y},{\bf z}]$.
\end{proposition}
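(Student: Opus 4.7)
The plan is to proceed by induction on the distance $d(t,t_0)$ from $t_0$ in $\mathbb T_n$, strengthening the inductive hypothesis so that the coefficient part behaves well under mutation. For a vertex $t$, I would establish simultaneously: (A) each cluster variable $X_{i;t}$ is homogeneous, of some degree $\mathbf{g}_{i;t}\in\mathbb Z^n$; and (B) for every $k$, the $\hat y$-monomial $\hat y_{k;t}:=y_{k;t}\prod_{i}X_{i;t}^{b_{ik}^t}$ is homogeneous of degree $0$. The base case $t=t_0$ is immediate: $X_{i;t_0}=x_{i;t_0}$ has degree $\mathbf{e}_i$, and $\deg(y_{k;t_0})=-\mathbf{b}_k$ cancels with $\sum_i b_{ik}^{t_0}\mathbf{e}_i=\mathbf{b}_k$.

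For the induction step, I would apply an $(R,\mathbf z)$-seed mutation at $k$ producing $t'$. The key use of (B) is in the exchange relation
\[
X_{k;t'}\,X_{k;t}=\Bigl(\prod_{j=1}^{n}X_{j;t}^{[-b_{jk}^t]_+}\Bigr)^{\!r_k}\,\frac{Z_k(\hat y_{k;t})}{Z_k|_{\mathbb P}(y_{k;t})}.
\]
Since $\deg(\hat y_{k;t})=0$ and $\deg(z_{k,s})=0$, every monomial $z_{k,s}\hat y_{k;t}^{s}$ in the numerator has degree $0$, so $Z_k(\hat y_{k;t})$ is homogeneous of degree $0$. The denominator $Z_k|_{\mathbb P}(y_{k;t})$ is a single monomial in $\mathbb P_{pr}$ (tropical sums collapse to monomials), hence automatically homogeneous. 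The prefactor is visibly homogeneous, and $X_{k;t}$ is homogeneous by (A). Dividing, $X_{k;t'}$ is homogeneous, which gives (A) at $t'$ and defines $\mathbf g_{k;t'}$. For $j\neq k$ we have $X_{j;t'}=X_{j;t}$, so (A) is inherited.

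To propagate (B) to $t'$, note $\hat y_{k;t'}=\hat y_{k;t}^{-1}$, which is degree $0$ trivially. For $j\neq k$, one must combine the mutation rule
\[
y_{j;t'}=y_{j;t}\bigl(y_{k;t}^{[b_{kj}^t]_+}\bigr)^{r_k}\bigl(Z_k|_{\mathbb P}(y_{k;t})\bigr)^{-b_{kj}^t}
\]
with the $R$-mutation $b_{ij}^{t'}=b_{ij}^{t}+r_k\bigl(b_{ik}^t[-b_{kj}^t]_+ + [b_{ik}^t]_+ b_{kj}^t\bigr)$ and with the new formula for $\mathbf g_{k;t'}$ extracted in the previous step, and check that all degree contributions cancel. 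Because $\deg(z_{i,s})=0$, the frozen coefficients drop out of this verification, and the computation reduces to the same identity that appears in the classical Fomin--Zelevinsky case, only with the matrix $B_tR$ playing the role of the classical exchange matrix.

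The main obstacle is precisely this last bookkeeping for (B) at $t'$: one has to track how the tropical exponents of $Z_k|_{\mathbb P}(y_{k;t})$ interact with the $r_k$-weighted $B$-mutation and with the newly produced $\mathbf g_{k;t'}$ (implicitly read off from the homogeneity of the exchange relation). This is tractable but a bit tedious; once it is verified, (A) and (B) pass from $t$ to $t'$, and the induction closes, proving homogeneity of every $X$-function.
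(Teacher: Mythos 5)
The paper offers no proof of this proposition --- it is imported from [NT] --- and the argument there is exactly the induction you set up: one proves simultaneously that the $X$-functions are homogeneous and that the elements $\hat y_{k;t}$ have degree $0$. Your base case, your step (A), and your treatment of the exchange relation are all correct: each term $z_{k,s}\hat y_{k;t}^{\,s}$ of the numerator has degree $0$ because $\deg(z_{k,s})=0$ and $\deg(\hat y_{k;t})=0$, while the denominator $Z_k|_{\mathbb P}(y_{k;t})$ is a tropical sum, hence a single Laurent monomial in ${\bf y},{\bf z}$ and automatically homogeneous. The one place you stop short is the propagation of (B) to $t'$ for $j\neq k$, which you rightly identify as the crux but leave as ``tractable but tedious'' bookkeeping involving the newly produced ${\bf g}_{k;t'}$. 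You can bypass that bookkeeping entirely: combining the three mutation rules (for ${\bf y}$, for ${\bf x}$, and the $R$-mutation of $B$) yields the closed-form mutation of the $\hat y$-variables,
\[
\hat y_{j;t'}=\hat y_{j;t}\,\bigl(\hat y_{k;t}^{[b_{kj}^t]_+}\bigr)^{r_k}\,\bigl(Z_k(\hat y_{k;t})\bigr)^{-b_{kj}^t}\qquad(j\neq k),
\]
the generalized analogue of the Fomin--Zelevinsky $\hat y$-mutation, in which $Z_k|_{\mathbb P}$ has been replaced by $Z_k$ itself. Every factor on the right-hand side is homogeneous of degree $0$ under the inductive hypothesis (again because $\deg(z_{k,s})=0$), so $\deg(\hat y_{j;t'})=0$ follows at once, with no reference to ${\bf g}_{k;t'}$ and no separate cancellation check. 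With that substitution your induction closes and the proof is complete; as written, the proposal is correct in structure but leaves its decisive step unexecuted.
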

Keep the above notations. The vector $g(x_{i;t}):=deg(X_{i;t})\in\mathbb Z^n$ is called the {\bf $g$-vector} of $x_{i;t}$ and the matrix
$$G_t=(g(x_{1;t}),\cdots,g(x_{n;t}))$$
is called the {\bf $G$-matrix} of ${\bf x}_t$.

\begin{proposition-definition}\cite{NT}
Let $\mathcal A(\mathcal S)$ be an $(R,{\bf z})$-cluster algebra with principal coefficients at $t_0$. Then each $y_{i;t}$ is a Laurent monomial of ${\bf y}$ with coefficient $1$, namely, $y_{i;t}$ has the form of
$$y_{i;t}=\prod\limits_{j=1}^ny_j^{c_{ji}^t}.$$
The resulting vector ${\bf c}_{i;t}=(c_{1i}^t,\cdots,c_{ni}^t)^{\rm T}$ is called a {\bf $c$-vector} and the matrix $C_t=({\bf c}_{1;t},\cdots,{\bf c}_{n;t})$ is called a {\bf $C$-matrix}.
\end{proposition-definition}

\begin{proposition}\cite{NT,CL}\label{procg}
Let $\mathcal A(\mathcal S)$ be an $(R,{\bf z})$-cluster algebra with principal coefficients at $t_0$, and $S$ be a skew-symmetrizer of $RB_{t_0}$, then
$$SRC_tR^{-1}S^{-1}G_t^{\rm T}=I_n.$$
\end{proposition}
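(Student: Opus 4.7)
The identity $SRC_tR^{-1}S^{-1}G_t^{\rm T}=I_n$ is a tropical-duality statement generalizing Nakanishi--Zelevinsky's $C_t^{\rm T} G_t = I_n$ to the $(R,{\bf z})$-setting. The plan is to prove it by induction on the distance from $t_0$ in $\mathbb T_n$, simultaneously with sign-coherence of the $c$-vectors.

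First, I would derive explicit mutation rules for the columns of $C_t$ and $G_t$. For $C_t$: the vector ${\bf c}_{i;t}$ is the exponent vector of the Laurent monomial $y_{i;t}$ in $\mathbb P_{pr}=Trop({\bf y},{\bf z})$, so the coefficient mutation rule
$$y_i'=y_i\left(y_k^{[b_{ki}]_+}\right)^{r_k}\left(Z_k|_{\mathbb P}(y_k)\right)^{-b_{ki}}\qquad(i\neq k)$$
translates into a recursion on the columns of $C_t$ once $Z_k|_{\mathbb P}(y_{k;t})$ is computed. Because the $z_{k,s}$ are free generators in $\mathbb P_{pr}$ with $z_{k,0}=z_{k,r_k}=1$, tropical evaluation of $Z_k$ at $y_{k;t}$ selects the minimal-exponent monomial, which (assuming sign-coherence of ${\bf c}_{k;t}$) equals $1$ if ${\bf c}_{k;t}\geq 0$ and $y_{k;t}^{r_k}$ if ${\bf c}_{k;t}\leq 0$. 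For $G_t$: using homogeneity of $X_{i;t}$ under the $\mathbb Z^n$-grading together with the explicit cluster variable mutation formula and the already-derived $C$-mutation rule, one extracts a case-split recursion for the columns of $G_t$ that also branches on the sign of ${\bf c}_{k;t}$.

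Second, I would run the induction. The base case $t=t_0$ is immediate since $C_{t_0}=G_{t_0}=I_n$ and all of $S,R,R^{-1},S^{-1}$ are diagonal hence commute. For the inductive step at $t'=\mu_k(t)$, I would substitute the $C$- and $G$-mutation rules into $SRC_{t'}R^{-1}S^{-1}G_{t'}^{\rm T}$ and verify it reduces to $I_n$ using the inductive hypothesis. The outer factor $SR$ and the interior $R^{-1}S^{-1}$ are precisely what absorb the $r_k$-scaling introduced by the generalized mutations and compensate for the non-skew-symmetric part of $B_{t_0}$ via the symmetrizer $S$ of $RB_{t_0}$ --- this is why the hypothesis requires $S$ to symmetrize $RB_{t_0}$ rather than $B_{t_0}$, and explains the specific placement of $R$ in the identity.

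The main obstacle will be sign-coherence of the $c$-vectors: both mutation formulas branch on ${\rm sign}({\bf c}_{k;t})$, so sign-coherence must be propagated as part of the induction rather than invoked a priori. I would therefore strengthen the inductive hypothesis to couple the identity with sign-coherence, so that each inductive step advances both outputs simultaneously; alternatively, one may invoke sign-coherence for generalized cluster algebras from \cite{NT} as an external input and then only the matrix-algebra verification remains. Either way, the delicate part is the bookkeeping of $r_k$-factors entering through the generalized mutation polynomials $Z_k$ and verifying that they cancel correctly against the diagonal conjugation by $SR$.
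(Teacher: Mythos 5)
The paper does not prove this proposition at all: it is imported verbatim from \cite{NT,CL}, so there is no internal proof to compare against. Your outline reproduces the route of \cite{NT} (tropical duality): derive the mutation recursions for the columns of $C_t$ and $G_t$, observe that both branch on the sign of ${\bf c}_{k;t}$ because the tropical evaluation $Z_k|_{\mathbb P}(y_{k;t})$ in $Trop({\bf y},{\bf z})$ equals $1$ or $y_{k;t}^{r_k}$ according to that sign (your computation of this is correct, using $z_{k,0}=z_{k,r_k}=1$), and then check by induction along $\mathbb T_n$ that the product $SRC_tR^{-1}S^{-1}G_t^{\rm T}$ is preserved. The other cited source \cite{CL} reaches the same identity by a genuinely different mechanism, namely the cluster formula $H_{t_0}^{t}(B_tR^{-1}S^{-1})(H_{t_0}^{t})^{\rm T}=B_{t_0}R^{-1}S^{-1}$ stated earlier in this paper, together with the identification of $H$-matrices with $G$-matrices under principal coefficients; that route trades the sign-coherence bookkeeping for the (already established) invariance of $B R^{-1}S^{-1}$ under conjugation by $H$.

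One point in your plan would fail as stated: you cannot ``strengthen the inductive hypothesis to couple the identity with sign-coherence, so that each inductive step advances both outputs simultaneously.'' The duality identity at a vertex $t$ tells you that $C_t$ is invertible (indeed $\det C_t=\pm1$), but invertibility says nothing about the signs of its columns, so the sign-coherence half of the coupled induction does not close; this is exactly why sign-coherence was a hard open problem in the classical setting rather than a corollary of tropical duality. Your fallback is the right one: take sign-coherence as an external input. In the generalized setting it does not require new deep input, because $\mu_k(B)R=\mu_k^\circ(BR)$ lets one relate the $C$-matrices of $\mathcal A(\mathcal S)$ to those of the classical companion cluster algebra with initial exchange matrix $B_{t_0}R$ (as in \cite{NR,NT}), and then quote classical sign-coherence. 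With that substitution your argument is the standard one and goes through.
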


\begin{theorem}\label{thmnt}
\cite[Theorem 3.22 and 3.23]{NT} Let $\mathcal A(\mathcal S)$ be an $(R,{\bf z})$-cluster algebra with coefficients semifield $\mathbb P$ and initial seed at $t_0$. Then
\begin{eqnarray}
y_{i;t}&=&\prod\limits_{j=1}^ny_{j;t_0}^{c_{ji}^t}\prod\limits_{j=1}^n\left(F_{j;t}|_{\mathbb P}({\bf y}_{t_0},{\bf z})\right)^{b_{ji}^t},\nonumber\\
x_{i;t}&=&\left(\prod\limits_{j=1}^nx_{j;t_0}^{g_{ji}^t}\right)\frac{F_{i;t}|_{\mathcal F}(\hat {\bf y}_{t_0},{\bf z})}{F_{i;t}|_{\mathbb P}({\bf y}_{t_0},{\bf z})}.\nonumber
\end{eqnarray}
\end{theorem}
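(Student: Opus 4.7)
The plan is to prove both formulas simultaneously by induction on the distance $d(t_0,t)$ in the $n$-regular tree $\mathbb{T}_n$. The base case $t=t_0$ is immediate since $C_{t_0}=G_{t_0}=I_n$ and $F_{i;t_0}=1$ for every $i$, so both identities collapse to $y_{i;t_0}=y_{i;t_0}$ and $x_{i;t_0}=x_{i;t_0}$. For the inductive step we fix an edge $t\,\underline{\;k\;}\,t'$, assume the formulas hold at $t$, and prove them at $t'$.

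Before running the induction, I would first derive three auxiliary mutation rules inside the principal coefficient algebra at $t_0$: (a) a $c$-vector mutation rule, obtained by applying the coefficient mutation formula in $\mathbb{P}_{pr}=\mathrm{Trop}({\bf y},{\bf z})$ and reading off the exponents of ${\bf y}$; (b) an $F$-polynomial mutation rule, obtained by applying the cluster variable mutation to $x_{k;t}$ in the principal coefficient setting and then specializing $x_{j;t_0}\mapsto 1$; and (c) a $g$-vector mutation rule, obtained by combining the cluster variable mutation with the homogeneity of $X$-functions under the $\mathbb{Z}^n$-grading $\deg(x_{i;t_0})={\bf e}_i,\;\deg(y_i)=-{\bf b}_i,\;\deg(z_{i,s})=0$. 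In the classical case these reduce to the familiar Fomin--Zelevinsky rules; here the mutation polynomial $Z_k$ and the factor $r_k$ appear in the combinations predicted by the $R$-mutation of $B$.

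With (a)--(c) in hand, the $y$-formula at $t'$ is verified by applying the coefficient mutation rule in the general semifield $\mathbb{P}$ to $y_{i;t'}$, substituting the inductive formula for $y_{j;t}$ on the right, and checking that the extra factor $(Z_k|_{\mathbb{P}}(y_{k;t}))^{-b_{ki}^t}$ coming from coefficient mutation is precisely the correction predicted by the $F$-polynomial mutation rule; the remaining exponents match by the $c$-vector mutation rule. The $x$-formula at $t'$ is handled analogously: expand $x_{k;t'}$ using the generalized exchange relation, substitute the inductive expressions for $x_{j;t}$ and $\hat y_{k;t}$, and collect terms. The critical observation is that $\hat y_{k;t}$ separates into a monomial in ${\bf x}_{t_0}$ times $\hat y_{k;t}|_{\text{principal}}$, so that $Z_k(\hat y_{k;t})$ factors as a monomial in ${\bf x}_{t_0}$ (providing the missing powers in the $g$-vector column) times $F_{k;t'}|_{\mathcal F}(\hat{\bf y}_{t_0},{\bf z})$, divided by the required denominator $F_{k;t'}|_{\mathbb{P}}({\bf y}_{t_0},{\bf z})$.

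The main obstacle is the systematic presence of $Z_k(u)=\sum_{s=0}^{r_k}z_{k,s}u^s$. In the classical case $Z_k(u)=1+u$ has a single nontrivial factor, and each mutation step introduces one $F$-polynomial correction whose tropical evaluation is immediate. In the generalized case $Z_k$ carries $r_k+1$ coefficients, and one must verify that substituting $\hat y_{k;t}$ into $Z_k$ produces exactly the monomial, $F$-polynomial, and coefficient factors dictated by (a)--(c); this entails carefully tracking the $r_k$ powers in the mutation rule $b'_{ij}=b_{ij}+r_k(b_{ik}[-b_{kj}]_++[b_{ik}]_+b_{kj})$ and in the exponent $r_k$ multiplying $[-b_{jk}]_+$ in the cluster variable mutation. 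A necessary preliminary is the Laurent phenomenon for $X$-functions in ${\bf y}$ and ${\bf z}$ (stated in the excerpt), which guarantees that the specializations $y_{j;t_0}\in\mathbb{P}$ and $\hat y_{k;t_0}\in\mathcal F$ on the right-hand side are well defined.
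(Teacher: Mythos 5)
This theorem is not proved in the paper at all: it is imported verbatim from Nakanishi \cite[Theorems 3.22 and 3.23]{NT}, so there is no in-paper argument to compare yours against. Your plan does coincide with the strategy of the actual proof in \cite{NT} (and of its classical ancestor in Fomin--Zelevinsky IV): a simultaneous induction along edges of $\mathbb T_n$, powered by mutation rules for $c$-vectors, $g$-vectors and $F$-polynomials in the principal-coefficient pattern, followed by a ``separation of additions'' check in an arbitrary semifield. So the route is the right one and no step of it is doomed.

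That said, as written this is a plan rather than a proof, and the gap is concentrated exactly where the theorem lives. Your items (a)--(c) are never stated precisely, and in the generalized setting their precise form is the whole point: one must pin down where $r_k$ and the coefficients $z_{k,s}$ enter the $F$-polynomial recursion (the correct statement is of the shape $F_{k;t'}F_{k;t}=\bigl(\prod_j F_{j;t}^{[-b^t_{jk}]_+}\prod_j y_j^{[-c^t_{jk}]_+}\bigr)^{r_k}\sum_{s=0}^{r_k} z_{k,s}\,(\cdots)^s$ with the reciprocity $z_{k,s}=z_{k,r_k-s}$ needed to make the two sign cases of $c$-vector mutation agree), and the inductive step for the $x$-formula is precisely the assertion that $Z_k(\hat y_{k;t})$ refactors as monomial $\times\, F_{k;t'}|_{\mathcal F}/F_{k;t'}|_{\mathbb P}$ --- which is the statement to be proved at $t'$, not something one may simply ``observe.'' Until those identities are written out and checked against the $R$-mutation rule for $B$, the induction has not closed. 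One smaller correction: the well-definedness of $F_{j;t}|_{\mathbb P}({\bf y}_{t_0},{\bf z})$ does not follow from the Laurent phenomenon; a polynomial with integer coefficients need not be evaluable in a semifield. What makes the specialization legitimate is that the $F$-polynomials are produced by a subtraction-free recursion, hence admit subtraction-free rational expressions.
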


\section{Main results}

In this section, we give our main results.

\begin{lemma}\cite[Lemma 4.20]{CL}\label{lemCL}
Let $\mathcal A(\mathcal S)$ be an $(R,{\bf z})$-cluster algebra with principal coefficients at $t_0$,  and $D_t^{t_0}=({\bf d}_{1;t}^{t_0},\cdots,{\bf d}_{n;t}^{t_0})$ be the $D$-matrix of ${\bf x}_t$  with respect to ${\bf x}_{t_0}$. If there exists a permutation $\sigma$ of $\{1,\cdots,n\}$ such that
${\bf d}_{j;t}^{t_0}={\bf d}_{\sigma(j);t_0}^{t_0}$ for $j=1,\cdots,n$, then $x_{j;t}=x_{\sigma(j);t_0}$ holds for $j=1,\cdots,n$.
\end{lemma}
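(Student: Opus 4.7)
The natural approach is to use the explicit Laurent expansion from Theorem~\ref{thmnt} together with the hypothesis to simultaneously pin down the $g$-vector and the $F$-polynomial of $x_{j;t}$. Since $\mathcal A(\mathcal S)$ has principal coefficients at $t_0$, Theorem~\ref{thmnt} yields
$$
X_{j;t}({\bf x}_{t_0},{\bf y},{\bf z}) \;=\; \Bigl(\prod_{i=1}^n x_{i;t_0}^{g_{ij}^t}\Bigr)\cdot \frac{F_{j;t}(\hat{\bf y}_{t_0},{\bf z})}{F_{j;t}|_{\mathbb P}({\bf y}_{t_0},{\bf z})},
$$
where $\hat y_i = y_i \prod_l x_{l;t_0}^{b_{li}^{t_0}}$. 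Writing $F_{j;t}=\sum_\alpha P_\alpha({\bf z})\hat{\bf y}^\alpha$ and substituting, the exponent of $x_{i;t_0}$ in each resulting monomial takes the form $g_{ij}^t + (B_{t_0}\alpha)_i$, so the hypothesis ${\bf d}_{j;t}^{t_0}=-{\bf e}_{\sigma(j)}$ translates into
$$
\min\bigl\{g_{ij}^t + (B_{t_0}\alpha)_i : \alpha\in\operatorname{supp}(F_{j;t})\bigr\} \;=\; \delta_{i,\sigma(j)}, \qquad i=1,\ldots,n.
$$

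Specializing to $\alpha=0$ (which lies in the support because $F_{j;t}$ has nonzero constant term $1$), I would first deduce the entrywise inequality $g_{ij}^t \geq \delta_{i,\sigma(j)}$, so that $G_t - P_\sigma$ is non-negative, where $P_\sigma$ is the permutation matrix of $\sigma$. To promote this inequality to the equality $G_t=P_\sigma$, I would invoke Proposition~\ref{procg} together with sign-coherence of the $C$-matrix for generalized cluster algebras (which yields $\det C_t=\pm 1$ and hence $\det G_t=\pm 1$), and then exploit the sharper min-constraint coming from the tropical-maximum term $\alpha_{\max}$ of $F_{j;t}$ to rule out non-trivial corrections.

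Once $G_t=P_\sigma$ is established, the expansion reduces to $x_{j;t}=x_{\sigma(j);t_0}\cdot F_{j;t}(\hat{\bf y}_{t_0},{\bf z})/F_{j;t}|_{\mathbb P}({\bf y}_{t_0},{\bf z})$, and the $d$-vector condition forces $(B_{t_0}\alpha)_i \geq 0$ componentwise for every $\alpha\in\operatorname{supp}(F_{j;t})$, with the minimum attained at $\alpha=0$. Combined with the positivity of the coefficients $P_\alpha$, this pins down $F_{j;t}=1$ and $F_{j;t}|_{\mathbb P}=1$, giving $x_{j;t}=x_{\sigma(j);t_0}$ as elements of $\mathcal A(\mathcal S)$.

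The main obstacle is the pin-down step $G_t=P_\sigma$: the entrywise bound $G_t\geq P_\sigma$ together with $\det G_t=\pm 1$ is by itself insufficient, since, e.g., $P_\sigma$ plus any strictly triangular non-negative integer matrix also satisfies both conditions. One must exploit the full minimum condition over \emph{all} $\alpha\in\operatorname{supp}(F_{j;t})$, not merely at $\alpha=0$, and this is where the interplay between $d$-vectors, $g$-vectors, and the $\hat{\bf y}$-support of $F_{j;t}$ (together with the unique tropical-maximum structure) enters decisively.
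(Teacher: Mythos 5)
The paper itself does not prove this lemma: it is imported from \cite[Lemma 4.20]{CL}, where the argument runs through the $H$-matrix and the cluster formula ($H_{t_0}^t(B_tR^{-1}S^{-1})(H_{t_0}^t)^{\rm T}=B_{t_0}R^{-1}S^{-1}$ with $\det H_{t_0}^t=\pm1$) rather than through $g$-vectors and $F$-polynomials. Your attempt is therefore a genuinely different route, but it is not a complete proof. The decisive step --- promoting the entrywise inequality $G_t\geq P_\sigma$ to the equality $G_t=P_\sigma$ --- is exactly the one you leave open: you correctly observe that $G_t\geq P_\sigma$ together with $\det G_t=\pm1$ is insufficient (a permutation matrix plus a nonnegative nilpotent correction satisfies both), and then defer to ``the full minimum condition over all $\alpha$'' without extracting anything concrete from it. As written, this is a statement of intent, not an argument.

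There are two further problems. First, even granting $G_t=P_\sigma$, the concluding step fails: the condition $\min_\alpha (B_{t_0}\alpha)_i=0$ for all $i$ does not force $F_{j;t}=1$. Any exponent $\alpha\neq 0$ in the support with $B_{t_0}\alpha=0$ (for instance $\alpha\in\ker B_{t_0}$, which is nontrivial whenever $B_{t_0}$ is singular) leaves every $x_{i;t_0}$-exponent, and hence the $d$-vector, unchanged; so the $d$-vector hypothesis alone cannot pin down the $F$-polynomial, and positivity of the coefficients $P_\alpha$ does not help. Some additional structural input is unavoidable here, which is precisely why the quoted proof goes through the determinant constraint on $H_{t_0}^t$. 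Second, you use that $F_{j;t}$ has constant term $1$ and that its coefficients are positive; for generalized cluster algebras neither fact is stated in this paper or in the results it quotes from \cite{NT}, so both would need independent justification before even the inequality $g_{ij}^t\geq\delta_{i,\sigma(j)}$ is available.
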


\begin{proposition}\label{prokey}
Let $\mathcal A(\mathcal S)$ be an $(R,{\bf z})$-cluster algebra,  $(B_{t_0}$,${\bf x}_{t_0},{\bf y}_{t_0})$ and $(B_t,{\bf x}_t,{\bf y}_t)$ be two seeds of $\mathcal A(\mathcal S)$. Let $D_t^{t_0}=({\bf d}_{1;t}^{t_0},\cdots,{\bf d}_{n;t}^{t_0})$ be the $D$-matrix of ${\bf x}_t$ with respect to ${\bf x}_{t_0}$, and $S$ be a skew-symmetrizer of $RB_{t_0}$. If there exists a permutation $\sigma$ of $\{1,\cdots,n\}$ such that ${\bf d}_{j;t}^{t_0}={\bf d}_{\sigma(j);t_0}^{t_0}$ for any $j=1,\cdots,n$.  Then for each $k\in\{1,\cdots,n\}$, we have

(i) $r_k=r_{\sigma(k)},\;s_k=s_{\sigma(k)}$ and  $z_{k,s}=z_{\sigma(k),s}$, where $s=1,\cdots,r_k-1$. In particular, the mutation polynomials $Z_k$ and $Z_{\sigma(k)}$ are equal;

(ii) $x_{k;t}=x_{\sigma(k);t_0},\; y_{k;t}=y_{\sigma(k);t_0}$ and $b_{ik}^t=b_{\sigma(i)\sigma(k)}^{t_0}$,
where $i=1,\cdots,n$.
\end{proposition}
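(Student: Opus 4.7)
The plan is to pass to the $(R,{\bf z})$-cluster algebra $\mathcal A(\mathcal S^{pr})$ with principal coefficients at $t_0$ that shares the initial exchange matrix $B_{t_0}$ and the mutation pair $(R,{\bf z})$ with $\mathcal A(\mathcal S)$, and to invoke Lemma \ref{lemCL} there. Since the $D$-matrix recursion of Proposition \ref{prodmut} depends only on $(B,R)$ and not on coefficients, the hypothesis ${\bf d}_{j;t}^{t_0} = -{\bf e}_{\sigma(j)}$ transfers verbatim to $\mathcal A(\mathcal S^{pr})$, so Lemma \ref{lemCL} applies and yields $x_{k;t}^{pr} = x_{\sigma(k);t_0}^{pr}$ for every $k$. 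Reading off the $X$-function of $x_{k;t}^{pr}$, the $g$-vector of $x_{k;t}$ equals ${\bf e}_{\sigma(k)}$, so $G_t = P_\sigma$ (the permutation matrix of $\sigma$), and each $F$-polynomial $F_{j;t}({\bf y},{\bf z}) = 1$ since $x_{j;t}^{pr}$ is a pure Laurent monomial in ${\bf x}_{t_0}$.

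Transporting these identities back to $\mathcal A(\mathcal S)$ via the separation formula of Theorem \ref{thmnt} gives
\[
x_{k;t} \;=\; \Bigl(\prod_l x_{l;t_0}^{g_{lk}^t}\Bigr)\,\frac{F_{k;t}(\hat{\bf y}_{t_0},{\bf z})}{F_{k;t}|_{\mathbb P}({\bf y}_{t_0},{\bf z})} \;=\; x_{\sigma(k);t_0},
\]
proving the cluster identity in (ii). From $x_{k;t} = x_{\sigma(k);t_0}$ one computes $J_{t_0}^t = H_{t_0}^t = P_\sigma$, so the cluster formula yields $P_\sigma (B_t R^{-1} S^{-1}) P_\sigma^T = B_{t_0} R^{-1} S^{-1}$, that is, $b_{ij}^t/(r_j s_j) = b_{\sigma(i)\sigma(j)}^{t_0}/(r_{\sigma(j)} s_{\sigma(j)})$. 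In parallel, Proposition \ref{procg} with $G_t = P_\sigma$ forces $C_t = R^{-1} S^{-1} P_\sigma SR$, whose only nonzero column-$k$ entry is $(C_t)_{\sigma(k),k} = r_k s_k/(r_{\sigma(k)} s_{\sigma(k)})$. Integrality of the $c$-vectors together with $|\det C_t| = 1$ (via Proposition \ref{procg} and $|\det G_t| = 1$) force each such positive rational entry to equal $1$. Hence $r_k s_k = r_{\sigma(k)} s_{\sigma(k)}$, $C_t = P_\sigma$ and $b_{ij}^t = b_{\sigma(i)\sigma(j)}^{t_0}$; the $y$-formula of Theorem \ref{thmnt} with $F_{j;t} = 1$ and $C_t = P_\sigma$ then delivers $y_{k;t} = y_{\sigma(k);t_0}$, completing (ii).

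For the individual identities $r_k = r_{\sigma(k)}$, $s_k = s_{\sigma(k)}$ and $z_{k,s} = z_{\sigma(k),s}$ in (i), I would compare the exchange relation at $t$ in direction $k$ with the exchange relation at $t_0$ in direction $\sigma(k)$ after applying the seed identification from (ii). Substituting $x_{j;t}=x_{\sigma(j);t_0}$, $b_{jk}^t=b_{\sigma(j)\sigma(k)}^{t_0}$ and $y_{k;t}=y_{\sigma(k);t_0}$ into the formula for $x_{k;t'}$ transforms it into the expression for $x_{\sigma(k);t_0'}$ up to replacing $(r_k,Z_k)$ by $(r_{\sigma(k)},Z_{\sigma(k)})$. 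Both mutated variables share the $d$-vector ${\bf e}_{\sigma(k)}$, so invoking Lemma \ref{lemCL} once more in the principal-coefficient setting identifies them as the same rational function, and matching the two exchange expressions as elements of the field of rational functions in the algebraically independent data $({\bf x}_{t_0},{\bf y},{\bf z})$ forces $r_k = r_{\sigma(k)}$ and $Z_k = Z_{\sigma(k)}$, hence all $z_{k,s} = z_{\sigma(k),s}$. The remaining identity $s_k = s_{\sigma(k)}$ then follows from the already-established $r_k s_k = r_{\sigma(k)} s_{\sigma(k)}$.

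The main obstacle will be the argument in part (i): legitimately identifying the two mutated cluster variables beyond the coincidence of their $d$-vectors, and then matching the two exchange expressions coefficient-by-coefficient to separate $r_k$ from $s_k$ and from the $z_{k,s}$. Everything preceding this is a routine combination of the principal-coefficient reduction, the separation formula, and the cluster formula once Lemma \ref{lemCL} is in hand.
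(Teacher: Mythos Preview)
Your treatment of part~(ii) is correct and in fact cleaner than the paper's. The paper computes $C_t$ as in your formula but does not exploit the integrality of $c$-vectors; instead it first proves $r_k=r_{\sigma(k)}$ and $s_k=s_{\sigma(k)}$ via the exchange-relation argument (part~(i)) and only then deduces $b_{ik}^t=b_{\sigma(i)\sigma(k)}^{t_0}$ and $C_t=P_\sigma$. Your observation that each diagonal entry $r_ks_k/(r_{\sigma(k)}s_{\sigma(k)})$ of $C_t$ is a positive integer whose product over $k$ equals $|\det C_t|=1$ immediately gives $r_ks_k=r_{\sigma(k)}s_{\sigma(k)}$, hence $C_t=P_\sigma$, $b_{ik}^t=b_{\sigma(i)\sigma(k)}^{t_0}$, and $y_{k;t}=y_{\sigma(k);t_0}$, all before touching~(i). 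That is a genuine simplification.

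There is, however, a real gap in your argument for part~(i). You write that the two mutated variables $x_{k;t'}$ and $x_{\sigma(k);t_1}$ ``share the $d$-vector ${\bf e}_{\sigma(k)}$, so invoking Lemma~\ref{lemCL} once more \ldots identifies them as the same rational function''. Lemma~\ref{lemCL} does \emph{not} say that two cluster variables with the same $d$-vector are equal; it says that if the \emph{entire} $D$-matrix $D_t^{t_0}$ (with respect to the principal-coefficient base $t_0$) is a negative permutation matrix, then the clusters coincide. Here $D_{t'}^{t_0}$ has $k$-th column $+{\bf e}_{\sigma(k)}$, not $-{\bf e}_{\sigma(k)}$, so the lemma does not apply. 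And you cannot first establish $x_{k;t'}=x_{\sigma(k);t_1}$ from the seed identification in~(ii) either, since mutation in direction $k$ uses $(r_k,Z_k)$ while mutation in direction $\sigma(k)$ uses $(r_{\sigma(k)},Z_{\sigma(k)})$, which are precisely the quantities you are trying to match; that route is circular.

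The paper circumvents this by \emph{not} first proving $x_{k;t'}=x_{\sigma(k);t_1}$. Instead it writes the two exchange products $x_{k;t'}x_k=P/Z_k|_{\mathbb P}(y_{k;t})$ and $x_{\sigma(k);t_1}x_k=Q/Z_{\sigma(k)}|_{\mathbb P}(y_{\sigma(k);t_0})$, expresses $x_{k;t'}$ as a rational function in ${\bf x}_{t_1}$ and $x_{\sigma(k);t_1}$ as a rational function in ${\bf x}_{t'}$, and then invokes the Laurent phenomenon in both directions to conclude that $P/Q$ and $Q/P$ are simultaneously Laurent polynomials in $x_1,\dots,\widehat{x_k},\dots,x_n$; since neither $P$ nor $Q$ is divisible by any $x_j$, this forces $P=Q$. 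From $P=Q$ one reads off $r_k=r_{\sigma(k)}$ (counting monomials in a variable $x_{i_0}$ with $b_{i_0k}^t\neq 0$) and then $s_k=s_{\sigma(k)}$ and $z_{k,s}=z_{\sigma(k),s}$ by comparing coefficients. You will need this Laurent-phenomenon step, or an equivalent device (e.g.\ rerunning the entire principal-coefficient/Lemma~\ref{lemCL} argument with base $t_1$, after checking $D_{t'}^{t_1}=-P_\sigma$), to close the gap.
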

\begin{proof}
Let $\mathcal A(\mathcal S^{pr})$ be an $(R,{\bf z})$-cluster algebra with principal coefficients at the seed $(B_{t_0}^{pr},{\bf x}_{t_0}^{pr},{\bf y}_{t_0}^{pr})$ satisfying $B_{t_0}^{pr}=B_{t_0}$. Let $(D_t^{t_0})^{pr}$ be the $D$-matrix of ${\bf x}_t^{pr}$ with respect to ${\bf x}_{t_0}^{pr}$.
By Corollary \ref{cordmat},  we know that $(D_t^{t_0})^{pr}=D_t^{t_0}$.
Since ${\bf d}_{j;t}^{t_0}={\bf d}_{\sigma(j);t_0}^{t_0}$ holds for any $j=1,\cdots,n$, we know that $({\bf d}_{j;t}^{t_0})^{pr}=({\bf d}_{\sigma(j);t_0}^{t_0})^{pr}$ holds for $j=1,\cdots,n$.  If we can prove the results in (i), (ii) hold for $\mathcal A(\mathcal S^{pr})$, then by Theorem \ref{thmnt}, we can get that they also hold for $\mathcal A(\mathcal S)$. Thanks to this, we can safely assume that  $\mathcal A(\mathcal S)$ itself is an $(R,{\bf z})$-cluster algebra with principal coefficients at $t_0$.

By Lemma \ref{lemCL}, we get that $$x_{j;t}=x_{\sigma(j);t_0}$$ holds for $j=1,\cdots,n$. So the $G$-matrix and the $H$-matrix of ${\bf x}_t$ are given by
$$G_t=({\bf e}_{\sigma(1)},\cdots,{\bf e}_{\sigma (n)})=H_{t_0}^t,$$ where ${\bf e}_i$ is the $i$-th column vector of $I_n$.

By Proposition \ref{procg}, we can get the $C$-matrix of $(B_t,{\bf x}_t,{\bf y}_t)$ is given by
\begin{eqnarray}\label{eqncmat}
\hspace{14mm}C_t=R^{-1}S^{-1}(G_t^{\rm T})^{-1}SR=(c_{ij}^t),\;\;\text{where }\;c_{ij}^t=\begin{cases}\frac{r_j}{r_{\sigma(j)}}\cdot \frac{s_j}{s_{\sigma(j)}},&i=\sigma(j);\\0,&i\neq \sigma(j).\end{cases}
\end{eqnarray}

By the cluster formula, we know that $H_{t_0}^t(B_tR^{-1}S^{-1})(H_{t_0}^t)^{\rm T}=B_{t_0}R^{-1}S^{-1}$. By comparing the $(\sigma(i),\sigma(k))$-entry of both sides, we get  $b_{ik}^tr_k^{-1}s_k^{-1}=b_{\sigma(i)\sigma(k)}^{t_0}r_{\sigma(k)}^{-1}s_{\sigma(k)}^{-1}$, i.e., we have
\begin{eqnarray}\label{eqn0}
b_{ik}^t=b_{\sigma(i)\sigma(k)}^{t_0}\cdot \frac{r_k}{r_{\sigma(k)}}\cdot \frac{s_k}{s_{\sigma(k)}}.
\end{eqnarray}

We write ${\bf x}_{t}=(x_1,\cdots,x_n)$, then we know that $x_{\sigma(j);t_0}=x_{j;t}=x_j$, where $j=1,\cdots,n$.
Now we fix a $k\in \{1,\cdots,n\}$, let $t^{~\underline{\quad k \quad}}~ t^{\prime}$ and $t_0^{~\underline{\quad \sigma(k) \quad}}~ t_1$ be the subgraph of $\mathbb T_n$.
 By the definition of $(R,{\bf z})$-seed mutation, we have the following equalities.
\begin{eqnarray}
\label{eqn1}
 x_{k;t^\prime}x_{k;t}&=&\left(\prod\limits_{i=1}^nx_{i;t}^{[-b_{ik}^t]_+}\right)^{r_k}\frac{Z_k(\hat y_{k;t})}{Z_k|_{\mathbb P}(y_{k;t})};\\
\label{eqn2} x_{\sigma(k);t_1}x_{\sigma(k);t_0}&=&\left(\prod\limits_{i=1}^nx_{i;t_0}^{[-b_{i\sigma(k)}^{t_0}]_+}\right)^{r_{\sigma(k)}}\frac{Z_{\sigma(k)}(\hat y_{\sigma(k);t_0})}{Z_{\sigma(k)}|_{\mathbb P}(y_{\sigma(k);t_0})},
\end{eqnarray}
where $Z_k$ and $Z_{\sigma(k)}$ are the corresponding mutation polynomials. Denote by $$U_{k;t}=y_{k;t}\prod\limits_{i=1}^nx_{i;t}^{[b_{ik}^t]_+},\;V_{k;t}=\prod\limits_{i=1}^nx_{i;t}^{[-b_{ik}^t]_+},$$ then we know that
\begin{eqnarray}
P&:=&\prod\limits_{i=1}^n(x_{i;t}^{[-b_{ik}^t]_+})^{r_k}Z_k(\hat y_{k;t})=V_{k;t}^{r_k}+z_{k,1}V_{k;t}^{r_k-1}U_{k;t}+\cdots+z_{k;r_k-1}V_{k;t}U_{k;t}^{r_k-1}+U_{k;t}^{r_k},\nonumber\\
Q&:=&\prod\limits_{i=1}^n(x_{i;t_0}^{[-b_{i\sigma(k)}^{t_0}]_+})^{r_{\sigma(k)}}Z_{\sigma(k)}(\hat y_{\sigma(k);t_0})=V_{\sigma(k);t_0}^{r_{\sigma(k)}}+z_{\sigma(k),1}V_{\sigma(k);t_0}^{r_{\sigma(k)}-1}U_{\sigma(k);t_0}+\cdots+
U_{\sigma(k);t_0}^{r_{\sigma(k)}}.\nonumber
\end{eqnarray}
Note that both $P$ and $Q$ are polynomials in $\mathbb {ZP}[x_1,\cdots,x_{k-1},x_{k+1},\cdots,x_n]$.
Now the equalities (\ref{eqn1}) and (\ref{eqn2}) can be expressed as follows:
\begin{eqnarray}
 x_{k;t^\prime}x_k&=&\frac{P}{Z_k|_{\mathbb P}(y_{k;t})};\nonumber\\
 x_{\sigma(k);t_1}x_k&=&\frac{Q}{Z_{\sigma(k)}|_{\mathbb P}(y_{\sigma(k);t_0})},\nonumber
\end{eqnarray}
We can get that
\begin{eqnarray}
\label{eqn3}
x_{k;t^\prime}&=&\frac{Z_{\sigma(k)}|_{\mathbb P}(y_{\sigma(k);t_0})}{Z_k|_{\mathbb P}(y_{k;t})}\cdot\frac{P}{Q}\cdot x_{\sigma(k);t_1},\\
\label{eqn4}
x_{\sigma(k);t_1}&=&\frac{Z_k|_{\mathbb P}(y_{k;t})}{Z_{\sigma(k)}|_{\mathbb P}(y_{\sigma(k);t_0})}\cdot\frac{Q}{P}\cdot x_{k;t^\prime}.
\end{eqnarray}
The equality (\ref{eqn3}) is the expansion of $x_{k;t^\prime}$ with respect to ${\bf x}_{t_1}$ and the equality (\ref{eqn4}) is the expansion of
$x_{\sigma(k);t_1}$ with respect to ${\bf x}_{t^\prime}$. By the Laurent phenomenon, we can get both $\frac{P}{Q}$ and $\frac{Q}{P}$ are Laurent polynomial in $\mathbb{ZP}[x_1,\cdots,x_{k-1},x_{k+1},\cdots,x_n]$. Thus we get $\frac{P}{Q}$ is a Laurent monomial in $\mathbb{ZP}[x_1,\cdots,x_{k-1},x_{k+1},\cdots,x_n]$.
Since both $P$ and $Q$ can not be divided by any $x_j$. We can get $\frac{P}{Q}=1$, i.e., we have $$P=Q.$$

In the following proof, we divide two cases. Case (a): the $k$-th column vector of $B_t$ is a zero vector; Case (b): the $k$-th column vector of $B_t$ is  a nonzero vector.

Case (a). In this case, the $k$-th column vector of $B_t$ is a zero vector. So the rank $1$ generalized cluster algebra generated by $x_{k;t}=x_k$ would split off from $\mathcal A(\mathcal S)$. In this case, $x_k=x_{\sigma(k)}$ actually implies that $k=\sigma(k)$. So $r_k=r_{\sigma(k)}, s_k=s_{\sigma(k)}$ and $z_{k,s}=z_{\sigma(k),s}$ hold. By the equality (\ref{eqncmat}), we know that the $k$-th column vector of $C_t$ is ${\bf e}_{\sigma(k)}={\bf e}_{k}$. So ${ y}_{k;t}=y_k=y_{k;t_0}=y_{\sigma(k);t_0}$.
Since the seed at $t_0$  is obtained from the seed at $t$ by a sequence of $(R,{\bf z})$-seed mutations, we get the $\sigma(k)=k$-th column vector of $B_{t_0}$ is also a zero vector. In particular, $b_{ik}^t=b_{\sigma(i)\sigma(k)}^{t_0}$ holds for $i=1,\cdots,n$.

Case (b). In this case, the $k$-th column vector of $B_t$ is a nonzero vector. So there exists $i_0$ such that $b_{i_0k}^t\neq 0$. Without loss of generality, we can assume that $b_{i_0k}^t>0$. By the equality (\ref{eqn0}), we also have $b_{\sigma(i_0)\sigma(k)}^{t_0}>0$. We just view $P$ and $Q$ as polynomials in $x_{i_0;t}=x_{i_0}$. We know that $P$ is a sum of $r_k+1$ distinct monomials, while $Q$ is a sum of $r_{\sigma(k)}+1$ distinct monomials. By $P=Q$, we can get $$r_k=r_{\sigma(k)}.$$
The highest exponent of $x_{i_0;t}=x_{i_0}$ in $P$ is $b_{i_0k}^tr_k$, while the highest exponent of $x_{\sigma(i_0);t_0}=x_{i_0;t}=x_{i_0}$ in $Q$
is $b_{\sigma(i_0)\sigma(k)}^{t_0}r_{\sigma(k)}$. By $P=Q$, we get $b_{i_0k}^tr_k=b_{\sigma(i_0)\sigma(k)}^{t_0}r_{\sigma(k)}$, i.e., we have
$$b_{i_0k}^t=b_{\sigma(i_0)\sigma(k)}^{t_0}\cdot\frac{r_{\sigma(k)}}{r_k}=b_{\sigma(i_0)\sigma(k)}^{t_0}>0.$$
By the equality (\ref{eqn0}), we know that $$b_{i_0k}^t=b_{\sigma(i_0)\sigma(k)}^{t_0}\cdot \frac{r_k}{r_{\sigma(k)}}\cdot \frac{s_k}{s_{\sigma(k)}}=b_{\sigma(i_0)\sigma(k)}^{t_0}\cdot \frac{s_k}{s_{\sigma(k)}}.$$
By comparing the above two equalities, we get $\frac{s_{\sigma(k)}}{s_k}=1$, i.e., $s_k=s_{\sigma(k)}$.
By $r_k=r_{\sigma(k)},\;s_k=s_{\sigma(k)}$ and the equality (\ref{eqn0}), we get that $b_{ik}^t=b_{\sigma(i)\sigma(k)}^{t_0}$ holds for $i=1,\cdots,n$.
By the equality (\ref{eqncmat}), we know that the $k$-th column vector is ${\bf e}_{\sigma(k)}={\bf e}_{k}$. So ${ y}_{k;t}=y_k=y_{k;t_0}=y_{\sigma(k);t_0}$.
By $r_k=r_{\sigma(k)},\;y_{k;t}=y_{\sigma(k);t_0},\;b_{ik}^t=b_{\sigma(i)\sigma(k)}^{t_0}$ for any $i$, and by comparing the coefficients before the monomials in $P$ and $Q$, we can get $z_{k,s}=z_{\sigma(k),s}$, where $s=1,\cdots,r_k-1$. This completes the proof.
\end{proof}

\begin{theorem}\cite[Theorem 6.3]{CL1}\label{thmdposi} Let  $\mathcal A(\mathcal S)$ be an $(I_n,\phi)$-cluster algebra with initial seed at $t_0$, and ${\bf d}_{i;t}^{t_0}= (d_1,\cdots,d_n)^{\rm T}$ be the $d$-vector of the cluster variable $x_{i;t}$ with respect to the cluster ${\bf x}_{t_0}$ of $\mathcal A(\mathcal S)$. Then for each $k\in\{1,\cdots,n\}$,

(i) $d_k$ depends only on $x_{i;t}$ and $x_{k;t_0}$, not on the clusters containing $x_{k;t_0}$;

(ii)  $d_k\geq -1$ for $k=1,\cdots,n$, and in details,
\begin{eqnarray}
d_k=\begin{cases} -1~,& \text{iff}\;\; x_{i,t}=x_{k,t_0};\\ 0~,& \text{iff}\;\;x_{i,t}\not=x_{k,t_0}\;\;\text{and}\;\; x_{i,t}, x_{k,t_0}\in {\bf x}_{t^\prime} \; \text{for some}\; t^\prime;\\ \text{a positive integer}~, & \text{iff}\;\; \text{there exists no cluster } {\bf x}_{t^\prime} \text{ containing both }x_{i,t} \text{ and }x_{k,t_0} .\end{cases}\nonumber
\end{eqnarray}
In particular, if $x_{i;t}\notin{\bf x}_{t_0}$, then ${\bf d}_{i;t}^{t_0}$ is a nonnegative vector.
\end{theorem}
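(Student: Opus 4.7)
The plan is to handle the two parts sequentially, with part~(i) feeding into the characterization in part~(ii); I will freely use the classical ($(I_n,\phi)$) machinery, since that is the setting of the theorem.

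For part~(i), I would induct on the exchange-graph distance between two seeds $(B_{t_0},{\bf x}_{t_0},{\bf y}_{t_0})$ and $(B_{t_0'},{\bf x}_{t_0'},{\bf y}_{t_0'})$ that both contain the cluster variable $x_{k;t_0}$. The classical analog of Theorem~\ref{thmgraph}, established in~\cite{CL1}, allows us to connect any two such seeds by a chain of mutations at indices $l\neq k$. So it suffices to verify that mutating the \emph{initial} cluster at some $l\neq k$ does not change the $k$-th component of ${\bf d}_{i;t}^{t_0}$. This I would read off from the reduced Laurent expansion of $x_{i;t}$ in the two initial clusters: the substitution exchanging $x_{l;t_0}$ and $x_{l;t_0'}$ is a Laurent expression in the variables $x_{j;t_0}$ with $j\neq l$, and in particular it does not touch $x_{k;t_0}$, so cannot alter its exponent in the denominator.

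For part~(ii), the case $x_{i;t}=x_{k;t_0}$ gives $d_k=-1$ immediately from ${\bf d}_{k;t_0}^{t_0}=-{\bf e}_k$. When $x_{i;t}\neq x_{k;t_0}$ but both appear in a common cluster ${\bf x}_{t'}$, part~(i) lets me swap the initial cluster from ${\bf x}_{t_0}$ to ${\bf x}_{t'}$ when computing $d_k$. Since ${\bf d}_{i;t}^{t'}$ is a negative standard basis vector (because $x_{i;t}\in {\bf x}_{t'}$) whose $k$-th entry is $0$, I conclude $d_k=0$. The final clause of the theorem, that ${\bf d}_{i;t}^{t_0}$ is nonnegative whenever $x_{i;t}\notin {\bf x}_{t_0}$, drops out from the trichotomy, since no coordinate can be $-1$ in this case.

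The main obstacle will be the remaining case: proving $d_k>0$ whenever no cluster contains both $x_{i;t}$ and $x_{k;t_0}$. The natural approach is induction on the distance from $t_0$ to $t$, using the $r_k=1$ specialization of the mutation formula in Proposition~\ref{prodmut} together with a sign-coherence/monotonicity property for d-vectors in the classical setting. A cleaner route is the contrapositive: if $d_k\leq 0$, one should be able to construct an explicit common cluster containing both $x_{i;t}$ and $x_{k;t_0}$ by a sequence of mutations guided by the signs of the coordinates of ${\bf d}_{i;t}^{t_0}$, using part~(i) repeatedly so that the $k$-th coordinate is preserved while other nonnegative coordinates are driven down to $-1$. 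This last step is the heart of the argument and relies on the finer positivity properties of d-vectors developed in~\cite{CL1}.
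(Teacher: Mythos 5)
The first thing to note is that the paper does not prove Theorem \ref{thmdposi} at all: it is quoted verbatim from \cite[Theorem 6.3]{CL1} as the classical-cluster-algebra input that the rest of the paper bootstraps to the generalized setting, so there is no in-paper proof to compare against; in \cite{CL1} the proof goes through the ``enough $g$-pairs property'', a substantial piece of machinery. Judged on its own, your proposal has two genuine gaps. For part (i), the key inductive step --- that mutating the reference cluster at $l\neq k$ cannot change the exponent of $x_{k;t_0}$ in the reduced denominator --- does not follow from the substitution argument you sketch. The exchange relation $x_{l;t_0}\,x_{l;t_0'}=M^{+}+M^{-}$ has exchange monomials $M^{\pm}$ supported on the variables $x_{j;t_0}$ with $j\neq l$, and these in general \emph{do} involve $x_{k;t_0}$ (whenever $b^{t_0}_{kl}\neq 0$); after substituting $x_{l;t_0}=(M^{+}+M^{-})/x_{l;t_0'}$ and clearing denominators, the powers of $x_{k;t_0}$ in the numerator change, and one must still rule out that the reduced numerator becomes divisible by $x_{k;t_0}$, or that cancellation alters its minimal exponent. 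This is precisely why the statement was a long-standing conjecture of Fomin--Zelevinsky (\cite{FZ3}) rather than an exercise in rewriting Laurent expansions, and your phrase ``it does not touch $x_{k;t_0}$'' is simply not true of the substitution.

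For part (ii), granting (i), your treatment of the cases $d_k=-1$ and $d_k=0$ is correct, and since the three conditions on the right-hand side are mutually exclusive and exhaustive, establishing the three forward implications would indeed yield all the ``iff''s. But the third forward implication --- that incompatibility forces $d_k>0$, equivalently that $d_k\le 0$ implies $x_{i;t}$ and $x_{k;t_0}$ lie in a common cluster --- is the entire content of the theorem, and you explicitly defer it to ``the finer positivity properties of d-vectors developed in \cite{CL1}'', i.e.\ to the very reference from which the theorem is quoted. The scheme you propose (mutate so as to drive nonnegative coordinates down to $-1$ while part (i) preserves the $k$-th coordinate) is not carried out, and it is unclear it can be run at the level of $d$-vectors alone; in \cite{CL1} the corresponding argument is carried out with $g$-vectors and $g$-pairs and only afterwards translated into statements about denominators. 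So the proposal correctly locates the difficulty but does not close it, and part (i) as written contains a false step.
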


\begin{theorem}\cite{CL1}\label{thmconnect}
Let $\mathcal A(\mathcal S)$ be an $(I_n,\phi)$-cluster algebra (i.e. classic cluster algebra), then the seeds
whose clusters contain particular cluster variables form a connected subgraph of the exchange graph
of $\mathcal A(\mathcal S)$.
\end{theorem}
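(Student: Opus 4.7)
My plan is to reduce the connectedness of the subgraph $\Sigma$ (consisting of seeds whose cluster contains each of the particular cluster variables $w_1, \ldots, w_s$) to the connectedness of the exchange graph of an auxiliary cluster algebra of rank $n-s$. I would proceed by induction on $n-s$. The base case $s=n$ is immediate: since a cluster is uniquely determined up to seed equivalence by the list of its cluster variables, $\Sigma$ reduces to a single vertex.

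For the inductive step, fix a seed $(\mathbf{x}_{t_0}, B_{t_0}) \in \Sigma$ and, after relabeling, assume $x_{i;t_0} = w_i$ for $i = 1, \ldots, s$. The crucial observation is that for any other seed $(\mathbf{x}_t, B_t) \in \Sigma$, the $D$-matrix $D_t^{t_0}$ has a controlled block form: after relabeling $\mathbf{x}_t$ so that $x_{i;t} = w_i$ for $i \le s$, the first $s$ columns of $D_t^{t_0}$ are $-\mathbf{e}_1, \ldots, -\mathbf{e}_s$ (by Theorem \ref{thmdposi}(ii), the $d_k=-1$ case), and for each $j>s$ the first $s$ coordinates of $\mathbf{d}_{j;t}^{t_0}$ vanish (by Theorem \ref{thmdposi}(ii), the $d_k=0$ case, since $x_{j;t}$ and $w_i$ coexist in the cluster $\mathbf{x}_t$ and are distinct). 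In the other direction, mutations at positions $k>s$ visibly preserve $\Sigma$, so the seeds reachable from $(\mathbf{x}_{t_0}, B_{t_0})$ using only mutations in $\{s+1, \ldots, n\}$ form a connected sub-subgraph of $\Sigma$.

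To finish, I would identify this restricted mutation orbit with all of $\Sigma$. The natural candidate is a cluster algebra $\mathcal A'$ of rank $n-s$ whose initial exchange matrix is the lower-right $(n-s)\times(n-s)$ block of $B_{t_0}$, with $w_1, \ldots, w_s$ absorbed into the coefficient semifield as frozen variables. Mutations at positions $s+1, \ldots, n$ in $\mathcal A(\mathcal S)$ translate exactly into seed mutations in $\mathcal A'$, so seeds of $\mathcal A'$ embed into $\Sigma$ as seeds reachable from $(\mathbf{x}_{t_0}, B_{t_0})$ via restricted mutation sequences. The connectedness of the exchange graph of $\mathcal A'$ (inherent in its construction from $\mathbb T_{n-s}$) then gives connectedness of the image inside $\Sigma$.

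The main obstacle is the reverse inclusion: showing that every seed in $\Sigma$ actually arises from such a restricted mutation sequence, rather than only being connected to $(\mathbf{x}_{t_0}, B_{t_0})$ in the ambient exchange graph. For this I would invoke the classic analogue of Lemma \ref{lemCL}: a seed is determined up to equivalence by the $D$-matrix of its cluster with respect to a reference cluster. Given the block structure of $D_t^{t_0}$ established above, the lower-right $(n-s) \times (n-s)$ block $D'$ satisfies the recursion of Proposition \ref{prodmut} applied to the reduced exchange matrix, so $D'$ is realized as the $D$-matrix of some seed of $\mathcal A'$. Combining this rigidity with the classic Lemma \ref{lemCL} then forces $(\mathbf{x}_t, B_t)$ itself to agree with the corresponding lift of that $\mathcal A'$-seed, completing the inductive step and yielding connectedness of $\Sigma$.
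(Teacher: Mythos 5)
This statement is not proved in the paper at all: it is imported wholesale from \cite{CL1}, where the argument goes through the ``enough $g$-pairs property'' (a statement about $G$-matrices and tropical duality), not through $d$-vectors. So your proposal must stand on its own, and it has a genuine gap at its central step. The parts that do work are: mutations in directions $k>s$ visibly stay inside $\Sigma$, and Theorem \ref{thmdposi} forces $D_t^{t_0}$ to have the block form $\begin{pmatrix}-I_s&0\\0&D'\end{pmatrix}$ after relabeling, for every seed of $\Sigma$. The gap is the converse. You assert that $D'$ ``satisfies the recursion of Proposition \ref{prodmut} applied to the reduced exchange matrix, so $D'$ is realized as the $D$-matrix of some seed of $\mathcal A'$.'' But Proposition \ref{prodmut} computes $D_t^{t_0}$ by a recursion along the path from $t_0$ to $t$ in $\mathbb T_n$, and that path will in general use mutation directions in $\{1,\dots,s\}$; at the intermediate vertices the $D$-matrices need not have the block form at all, so no recursion is induced on the lower-right $(n-s)\times(n-s)$ block. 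The claim that $D'$ is realized by a restricted mutation sequence is exactly the surjectivity you are trying to establish (every seed of $\Sigma$ is reachable from $t_0$ by a $\{s+1,\dots,n\}$-sequence) --- i.e., it is the theorem in disguise. The block form of $D_t^{t_0}$ is a necessary condition for membership in $\Sigma$; nothing in your argument shows it is sufficient for restricted reachability, and that sufficiency is the whole content of the result.

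Two smaller points. Your base case $s=n$ silently invokes ``a seed is determined up to equivalence by its cluster,'' which is \cite[Conjecture 4.14(2)]{FZ2} --- a nontrivial theorem, not an immediate fact. And resting the argument on Theorem \ref{thmdposi} is logically delicate: in \cite{CL1} the $d$-vector theorem is established after, and with the help of, the connectedness statement (indeed, in the present paper its generalized analogue, Theorem \ref{thmdvec}, is deduced \emph{from} Theorem \ref{promain}, whose proof rests on Theorem \ref{thmgraph}), so a self-contained write-up along your lines would have to verify that no circularity is introduced.
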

\begin{remark}
Note that the statements in Theorem \ref{thmdposi} and Theorem \ref{thmconnect} come from  \cite[Conjecture 7.4]{FZ3} and \cite[Conjecture 4.14(3)]{FZ2} respectively.
\end{remark}
Let $I$ be a subset of $\{1,\cdots,n\}$. We say that $(k_1,\cdots,k_s)$ is an {\bf $I$-sequence}, if $k_j\in I$ for $j=1,\cdots,s$.

\begin{theorem}\label{thmgraph}
Let $\mathcal A(\mathcal S)$ be an $(R,{\bf z})$-cluster algebra, then the seeds
whose clusters contain particular cluster variables form a connected subgraph of the exchange graph ${\bf EG}(\mathcal A(\mathcal S))$
of $\mathcal A(\mathcal S)$.
\end{theorem}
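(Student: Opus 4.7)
The plan is to reduce the statement to the classical cluster algebra case (Theorem \ref{thmconnect}) by comparison with a \emph{classical companion}. Let $\mathcal A(\overline{\mathcal S})$ denote the classical (i.e., $(I_n,\phi)$-type) cluster algebra whose initial exchange matrix is $\overline B_{t_0}:=B_{t_0}R$. From the identity $\mu_k(B)R=\mu_k^\circ(BR)$ recorded in the remark following the $(R,{\bf z})$-seed mutation definition, one inductively obtains $B_tR=\overline B_t$ for every $t\in\mathbb T_n$, so Corollary \ref{cordmat} yields $D_v^w=\overline D_v^w$ at every pair of vertices $v,w\in\mathbb T_n$. After possibly relabelling the initial seed, I may assume the prescribed cluster variables are $\{x_{k;t_0}:k\in I\}\subseteq{\bf x}_{t_0}$ for some $I\subseteq\{1,\ldots,n\}$, and set $\mathcal T:=\{t\in\mathbb T_n : x_{k;t_0}\in{\bf x}_t\text{ for all }k\in I\}$, with $\overline{\mathcal T}$ defined analogously in $\mathcal A(\overline{\mathcal S})$. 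The inclusion $\mathcal T\subseteq\overline{\mathcal T}$ is immediate from Theorem \ref{thmdposi}(ii): if $x_{j;t}=x_{k;t_0}$ then ${\bf d}_{j;t}^{t_0}=-{\bf e}_k=\overline{\bf d}_{j;t}^{t_0}$, whence $\overline x_{j;t}=\overline x_{k;t_0}$.

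Fix any $t\in\mathcal T$. The goal is to produce a walk in ${\bf EG}(\mathcal A(\mathcal S))$ from the vertex of $t$ to that of $t_0$ that stays inside the subgraph carved out by $\mathcal T$. Since $t\in\overline{\mathcal T}$, classical Theorem \ref{thmconnect} applied to $\mathcal A(\overline{\mathcal S})$ gives a path in ${\bf EG}(\mathcal A(\overline{\mathcal S}))$ from the vertex of $t$ to that of $t_0$ inside the $\overline{\mathcal T}$-subgraph; lifting it to $\mathbb T_n$ yields a sequence $t=s_0,s_1,\ldots,s_m$ with $s_{l+1}=\mu_{k_l}(s_l)$, every $s_l\in\overline{\mathcal T}$, and $s_m$ equivalent in $\mathcal A(\overline{\mathcal S})$ to $t_0$. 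Because each $\overline x_{k;t_0}$, $k\in I$, lies in both $\overline{\bf x}_{s_l}$ and $\overline{\bf x}_{s_{l+1}}$, the mutation index $k_l$ avoids the position set $\overline J_{s_l}:=\{j : \overline x_{j;s_l}\in\{\overline x_{k;t_0}\}_{k\in I}\}$.

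The crucial point is that this very sequence also stays in $\mathcal T$. Assume inductively $s_l\in\mathcal T$ and pick $j_k$ with $x_{j_k;s_l}=x_{k;t_0}$; then ${\bf d}_{j_k;s_l}^{t_0}=-{\bf e}_k=\overline{\bf d}_{j_k;s_l}^{t_0}$, so $j_k\in\overline J_{s_l}$. The $j_k$ are distinct (cluster variables in a cluster are algebraically independent) and $|\overline J_{s_l}|=|I|$ because $s_l\in\overline{\mathcal T}$, hence $\{j_k:k\in I\}=\overline J_{s_l}$. Since $k_l\notin\overline J_{s_l}$, the $\mathcal A(\mathcal S)$-mutation $\mu_{k_l}$ at $s_l$ leaves every $x_{k;t_0}$ in place, so $s_{l+1}\in\mathcal T$. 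At the final vertex, equivalence of $s_m$ and $t_0$ in $\mathcal A(\overline{\mathcal S})$ forces $\overline D_{s_m}^{t_0}=-P_\sigma$ for some permutation $\sigma$, hence $D_{s_m}^{t_0}=-P_\sigma$ by Corollary \ref{cordmat}; Proposition \ref{prokey} then upgrades this to equivalence of $s_m$ and $t_0$ in $\mathcal A(\mathcal S)$. Projecting the sequence $(s_l)$ to ${\bf EG}(\mathcal A(\mathcal S))$ produces the desired walk inside the $\mathcal T$-subgraph, proving connectedness.

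The main obstacle is conceptual rather than computational: a priori, the seed-equivalence relations defined by $\mathcal A(\mathcal S)$ and $\mathcal A(\overline{\mathcal S})$ could differ, and cluster variables in the two algebras are formally unrelated objects. The argument bypasses these difficulties by transporting only the shared $D$-matrix data, namely the positions of initial cluster variables and the column structure $-P_\sigma$, and invoking Proposition \ref{prokey} as a black box to convert $D$-matrix coincidences back into genuine seed equivalences in $\mathcal A(\mathcal S)$ at the endpoint of each lifted path.
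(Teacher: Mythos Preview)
Your proof is correct and follows essentially the same strategy as the paper: pass to the classical companion with initial matrix $B_{t_0}R$, transfer membership in $\mathcal T$ to $\overline{\mathcal T}$ via the equality of $D$-matrices together with Theorem~\ref{thmdposi}, invoke the classical connectivity result (Theorem~\ref{thmconnect}), and use Proposition~\ref{prokey} at the endpoint to upgrade classical seed equivalence to generalized seed equivalence. The only cosmetic difference is that the paper runs the lifted path from $t_0$ toward $t$ (so the fixed variables sit at positions $p{+}1,\ldots,n$ throughout, making the ``$\{1,\ldots,p\}$-sequence'' claim immediate), whereas you run it from $t$ toward $t_0$ and must therefore carry out the inductive position-tracking argument with $\overline J_{s_l}$; both are valid and the underlying mechanism is the same.
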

\begin{proof}
For any  $t_0\in \mathbb T_n$  and any  subset $J$ of the cluster variables in ${\bf x}_{t_0}$, we consider the seeds of $\mathcal A(\mathcal S)$
whose clusters contain $J$. We need to check that if these seeds form a connected
subgraph of ${\bf EG}(\mathcal A(\mathcal S))$. Without loss of generality, we can assume that $J=\{x_{p+1;t_0},\cdots,x_{n;t_0}\}$.

Let $(B_{t},{\bf x}_{t},{\bf y}_{t})$ be a seed of $\mathcal A(\mathcal S)$ and the cluster ${\bf x}_{t}$ contains the cluster variables $x_{p+1;t_0},\cdots,x_{n;t_0}$. It suffices to find a vertex $u\in\mathbb T_n$ such that $u$ is connected with $t_0$ by a $\{1,\cdots,p\}$-sequence on $\mathbb T_n$ and the seed $(B_u,{\bf x}_u,{\bf y}_u)$ is equivalent to the seed $(B_{t},{\bf x}_{t},{\bf y}_{t})$.

Let $\mathcal A(\mathcal S^\circ)$ be an $(I_n,\phi)$-cluster algebra (i.e. classic cluster algebra) with initial seed $(B_{t_0}^\circ,{\bf x}_{t_0}^\circ,{\bf y}_{t_0}^\circ)$, where $B_{t_0}^\circ=B_{t_0}R$.
For any $v,w\in\mathbb T_n$, let $D_v^{w}=({\bf d}_{1;v}^{w},\cdots,{\bf d}_{n;v}^{w})$ be the $D$-matrix of ${\bf x}_v$ with respect to ${\bf x}_{w}$, and $(D_v^{w})^\circ$ be the $D$-matrix of ${\bf x}_v^\circ$ with respect to ${\bf x}_{w}^\circ$. By  Corollary \ref{cordmat}, we know that $$D_v^{w}=(D_v^{w})^\circ.$$

Since the cluster ${\bf x}_{t}$ contains the cluster variables $x_{p+1;t_0},\cdots,x_{n;t_0}$, we know that the $D$-matrix $D_t^{t_0}=(D_t^{t_0})^\circ$ contains the $d$-vectors ${\bf d}_{p+1;t_0}^{t_0},\cdots,{\bf d}_{n;t_0}^{t_0}$. Note that ${\bf d}_{i;t_0}^{t_0}=-{\bf e}_i$ which is a non-positive vector. Then by Theorem \ref{thmdposi}, we know that the cluster ${\bf x}_t^\circ$ contains the cluster variables $x_{p+1;t_0}^\circ,\cdots,x_{n;t_0}^\circ$. Then by Theorem \ref{thmconnect}, there exists a vertex $u$ of $\mathbb T_n$ satisfying that

(a) $u$ is connected with $t_0$ by an $\{1,\cdots,p\}$-sequence on $\mathbb T_n$, i.e., we have
$$t_0^{~\underline{  \quad k_1\quad   }}~ t_1^{~\underline{\quad k_2 \quad}} ~t_2^{~\underline{\quad k_3 \quad}} ~\cdots ~t_{s-1} ^{~\underline{~\quad k_{s} \quad}}~ t_s=u,$$
where  $k_j\leq p$ for $j=1,\cdots,s$;

(b) the two seeds $(B_u^\circ,{\bf x}_u^\circ,{\bf y}_u^\circ)$ and $(B_{t}^\circ,{\bf x}_{t}^\circ,{\bf y}_{t}^\circ)$ are equivalent via a permutation $\sigma$. In particular, $x_{j;t}^\circ=x_{\sigma(j);u}^\circ$ for $j=1,\cdots,n$.

By (b), we know that $({\bf d}_{j;t}^u)^\circ=({\bf d}_{\sigma(j);u}^u)^\circ$ for $j=1,\cdots,n$. By $D_t^u=(D_t^u)^\circ$, we get that
 $${\bf d}_{j;t}^u={\bf d}_{\sigma(j);u}^u, \;\text{where }j=1,\cdots,n.$$
Then by Proposition \ref{prokey}, we get that the seeds $(B_u,{\bf x}_u,{\bf y}_u)$ and $(B_{t},{\bf x}_{t},{\bf y}_{t})$ are equivalent via the permutation $\sigma$.

On the other hand, we know that each seed of $\mathcal A(\mathcal S)$ appearing in the following subgraph
$$t_0^{~\underline{  \quad k_1\quad   }}~ t_1^{~\underline{\quad k_2 \quad}} ~t_2^{~\underline{\quad k_3 \quad}} ~\cdots ~t_{s-1} ^{~\underline{~\quad k_{s} \quad}}~ t_s=u$$
contains the cluster variables $x_{p+1,t_0},\cdots,x_{n;t_0}$, by $k_j\leq p$ for $j=1,\cdots,s$. So the seeds of $\mathcal A(\mathcal S)$
whose clusters contain the cluster variables $x_{p+1,t_0},\cdots,x_{n;t_0}$ form a connected subgraph of the exchange graph ${\bf EG}(\mathcal A(\mathcal S))$
of $\mathcal A(\mathcal S)$.
\end{proof}

\begin{theorem}\label{promain}
Let $\mathcal A(\mathcal S)$ be an  $(R,{\bf z})$-cluster algebra with  initial seed  $(B_{t_0},{\bf x}_{t_0},{\bf y}_{t_0})$, and $\mathcal A(\overline{\mathcal S})$ be an  $(\overline R,\overline{\bf z})$-cluster algebra with  initial seed  $(\overline B_{t_0},\overline{\bf x}_{t_0},\overline{\bf y}_{t_0})$. Let $\mathcal X(\mathcal S)$  be the set of cluster variables of $\mathcal A(\mathcal S)$, and  $\mathcal X(\overline{\mathcal S})$ be the set of cluster variables of $\mathcal A(\overline{\mathcal S})$. If  $B_{t_0}R=\overline B_{t_0}\overline R$,  then the following statements hold.

(i) For any  $t_1,t\in\mathbb T_n$ and any $i_{0}, j_0\in\{1,\cdots,n\}$, $x_{i_0;t_1}=x_{j_0;t}$ if and only if $\overline x_{i_0;t_1}=\overline x_{j_0;t}$, where $x_{i_0;t_1},x_{j_0;t}$ are cluster variables of $\mathcal A(\mathcal S)$ and $\overline x_{i_0;t_1}, \overline x_{j_0;t}$ are the corresponding cluster variables of $\mathcal A(\overline{\mathcal S})$.

(ii) There exists a bijection $\alpha:\mathcal X(\mathcal S)\rightarrow \mathcal X(\overline{\mathcal S})$ given by
$\alpha(x_{i;t})=\overline x_{i;t}$, which induces a bijection from the set of clusters of  $\mathcal A(\mathcal S)$ to the set of clusters of $\mathcal A(\overline{\mathcal S})$.
\end{theorem}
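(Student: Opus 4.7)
The plan is to prove (i) first; part (ii) will then follow formally. The key bridge between the two algebras is Corollary \ref{cordmat}: the hypothesis $B_{t_0}R = \overline B_{t_0}\overline R$ yields $D_v^w = \overline D_v^w$ for every $v,w\in\mathbb T_n$, so any property of cluster variables detected by $d$-vectors transfers between $\mathcal A(\mathcal S)$ and $\mathcal A(\overline{\mathcal S})$.

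For (i), the hypothesis is symmetric in $\mathcal S$ and $\overline{\mathcal S}$, so it suffices to show one direction, say that $x_{i_0;t_1}=x_{j_0;t}$ implies $\overline x_{i_0;t_1}=\overline x_{j_0;t}$. Assume the former. Then the seeds at $t_1$ and $t$ in $\mathcal A(\mathcal S)$ both contain this common cluster variable, so Theorem \ref{thmgraph} (applied to $\mathcal A(\mathcal S)$) supplies a path $t_1=s_0,s_1,\dots,s_\ell$ in $\mathbb T_n$, with edge labels $k_1,\dots,k_\ell$, such that every seed along the path contains $x_{i_0;t_1}$ and the seed at $s_\ell$ is equivalent to the seed at $t$ via some permutation $\sigma$ of $\{1,\dots,n\}$. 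At step $a$ the label $k_a$ must avoid the position of $x_{i_0;t_1}$ inside $\mathbf{x}_{s_{a-1}}$, so the positional entry with that same index in $\overline{\mathbf{x}}_{s_{a-1}}$ is equally untouched by $\mu_{k_a}$ in $\mathcal A(\overline{\mathcal S})$; tracing this position gives $\overline x_{i_0;t_1}=\overline x_{p_\ell;s_\ell}$, where $p_\ell$ denotes the final position. On the other hand, the equivalence at $s_\ell$ via $\sigma$ tells us $\mathbf{d}_{j;t}^{s_\ell}=-\mathbf{e}_{\sigma(j)}=\mathbf{d}_{\sigma(j);s_\ell}^{s_\ell}$ for each $j$; by $D=\overline D$ the same holds for the $\overline{\mathbf{d}}$'s. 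Proposition \ref{prokey} applied in $\mathcal A(\overline{\mathcal S})$ then gives $\overline x_{j;t}=\overline x_{\sigma(j);s_\ell}$ for all $j$. Specializing at $j=j_0$ and noting that $x_{j_0;t}=x_{i_0;t_1}=x_{p_\ell;s_\ell}$ forces $\sigma(j_0)=p_\ell$ (by the free generation property of clusters), we conclude $\overline x_{j_0;t}=\overline x_{\sigma(j_0);s_\ell}=\overline x_{p_\ell;s_\ell}=\overline x_{i_0;t_1}$.

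Part (ii) is formal from (i): $\alpha(x_{i;t}):=\overline x_{i;t}$ is well-defined and injective by the two directions of (i), surjective by inspection, and sends a cluster $\mathbf{x}_t=\{x_{1;t},\dots,x_{n;t}\}$ to the cluster $\overline{\mathbf{x}}_t=\{\overline x_{1;t},\dots,\overline x_{n;t}\}$, inducing the claimed cluster bijection.

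The main obstacle will be the path-tracing in (i): carefully bookkeeping the position of the common cluster variable along the path supplied by Theorem \ref{thmgraph}, and reconciling it with the final permutation $\sigma$ witnessing the seed equivalence. The underlying conceptual point is that Proposition \ref{prokey} converts $d$-vector data into seed equivalences, and the identity $D=\overline D$ forces such equivalences to transfer between the two algebras, while Theorem \ref{thmgraph} supplies the connectivity needed to ``export'' equality of cluster variables along a mutation path.
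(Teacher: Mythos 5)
Your proposal is correct and follows essentially the same route as the paper: it uses Corollary \ref{cordmat} to identify the $D$-matrices, invokes (the proof of) Theorem \ref{thmgraph} to produce a mutation path avoiding the position of the common cluster variable together with a seed equivalence $\sigma$ at its endpoint, and then applies Proposition \ref{prokey} in $\mathcal A(\overline{\mathcal S})$ to transfer that equivalence. The only cosmetic difference is your tracking of a ``moving'' position $p_\ell$, which in fact stays equal to $i_0$ since the edge labels never equal $i_0$.
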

\begin{proof} (i) The proof is similar to that of Theorem \ref{thmgraph}.

For any $v,w\in\mathbb T_n$, let $D_v^{w}=({\bf d}_{1;v}^{w},\cdots,{\bf d}_{n;v}^{w})$ be the $D$-matrix of ${\bf x}_v$ with respect to ${\bf x}_{w}$, and
 $\overline D_v^{w}$  be the $D$-matrix of $\overline {\bf x}_v$ with respect to $\overline {\bf x}_{w}$. By  Corollary \ref{cordmat}, we know that $$D_v^{w}=\overline D_v^{w}.$$

If $x_{i_0;t_1}=x_{j_0;t}$, then by Theorem \ref{thmgraph}, there exists a vertex $u$ of $\mathbb T_n$ satisfying that

(a) $u$ is connected with $t_1$ by an $\{1,\cdots,i_0-1,i_0+1,\cdots,n\}$-sequence on $\mathbb T_n$, i.e., we have
$$t_1^{~\underline{  \quad k_1\quad   }}~ t_2^{~\underline{\quad k_2 \quad}} ~t_3^{~\underline{\quad k_3 \quad}} ~\cdots ~t_{s-1} ^{~\underline{~\quad k_{s-1} \quad}}~ t_s=u,$$
where  $k_j\neq i_0$ for $j=1,\cdots,s-1$;

(b) the two seeds $(B_u,{\bf x}_u,{\bf y}_u)$ and $(B_{t},{\bf x}_{t},{\bf y}_{t})$ are equivalent via a permutation $\sigma$. In particular, $x_{j;t}=x_{\sigma(j);u}$ for $j=1,\cdots,n$.

Since $k_j\neq i_0$ for $j=1,\cdots,s-1$, we can get  $$x_{i_0;t_1}=x_{i_0;u}\;\;\text{and } \;\overline x_{i_0;t_1}=\overline x_{i_0;u}.$$
 By the facts $x_{j_0;t}=x_{i_0;t_1}=x_{i_0;u}$ and $x_{j;t}=x_{\sigma(j);u}$, where $j=1,\cdots,n$, we get that $i_0=\sigma(j_0)$.

By (b), we know that ${\bf d}_{j;t}^u={\bf d}_{\sigma(j);u}^u$ for $j=1,\cdots,n$. Since $D_t^u=\overline D_t^u$, we get
 $$\overline{\bf d}_{j;t}^u=\overline {\bf d}_{\sigma(j);u}^u, \;\text{where }j=1,\cdots,n.$$
Then by Proposition \ref{prokey}, we know that $\overline x_{j_0;t}=\overline x_{\sigma(j_0);u}=\overline x_{i_0;u}$. By $\overline x_{i_0;t_1}=\overline x_{i_0;u}$, we get $\overline x_{j_0;t_1}=\overline x_{i_0;t}$

Similarly, if $\overline x_{j_0;t_1}=\overline x_{i_0;t}$, we can show that $x_{i_0;t_1}=x_{j_0;t}$.

(ii) follows from (i).
\end{proof}

\section{Applications of Theorem \ref{promain}}

In this section, we give two applications of of Theorem \ref{promain}. To be more precisely, we prove some properties of the components of the $d$-vectors in Theorem \ref{thmdvec} and we give a characterization for the clusters of a generalized cluster algebra in Theorem \ref{thmlast}.

\begin{theorem}\label{thmdvec}
Let  $\mathcal A(\mathcal S)$ be an $(R,{\bf z})$-cluster algebra with initial seed at $t_0$, and ${\bf d}_{i;t}^{t_0}= (d_1,\cdots,d_n)^{\rm T}$ be the $d$-vector of the cluster variable $x_{i;t}$ with respect to the cluster ${\bf x}_{t_0}$ of $\mathcal A(\mathcal S)$. Then for each $k\in\{1,\cdots,n\}$,

(i) $d_k$ depends only on $x_{i;t}$ and $x_{k;t_0}$, not on the clusters containing $x_{k;t_0}$;

(ii)  $d_k\geq -1$ for $k=1,\cdots,n$, and in details,
\begin{eqnarray}
d_k=\begin{cases} -1~,& \text{iff}\;\; x_{i,t}=x_{k,t_0};\\ 0~,& \text{iff}\;\;x_{i,t}\not=x_{k,t_0}\;\;\text{and}\;\; x_{i,t}, x_{k,t_0}\in {\bf x}_{t^\prime} \; \text{for some}\; t^\prime;\\ \text{a positive integer}~, & \text{iff}\;\; \text{there exists no cluster } {\bf x}_{t^\prime} \text{ containing both }x_{i,t} \text{ and }x_{k,t_0} .\end{cases}\nonumber
\end{eqnarray}
In particular, if $x_{i;t}\notin{\bf x}_{t_0}$, then ${\bf d}_{i;t}^{t_0}$ is a nonnegative vector.
\end{theorem}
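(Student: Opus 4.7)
The proposal is to reduce the statement to the already-established classical case (Theorem \ref{thmdposi}) by pairing $\mathcal{A}(\mathcal{S})$ with a carefully chosen classical cluster algebra, and then transfer the conclusions through Corollary \ref{cordmat} and Theorem \ref{promain}.

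Concretely, I would set $\mathcal{A}(\mathcal{S}^\circ)$ to be the $(I_n,\phi)$-cluster algebra (i.e.\ classical cluster algebra) whose initial exchange matrix is $B_{t_0}^\circ := B_{t_0} R$. Then the hypothesis of both Corollary \ref{cordmat} and Theorem \ref{promain} is satisfied, since $B_{t_0} R = B_{t_0}^\circ \cdot I_n$. By Corollary \ref{cordmat}, for any $t,t_0\in \mathbb{T}_n$ the $D$-matrices coincide: $D_t^{t_0} = (D_t^{t_0})^\circ$. In particular, the $d$-vector components $(d_1,\ldots,d_n)^{\mathrm{T}}$ of $x_{i;t}$ with respect to $\mathbf{x}_{t_0}$ agree entry-by-entry with those of $x_{i;t}^\circ$ with respect to $\mathbf{x}_{t_0}^\circ$.

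For part (i), I would invoke the classical statement of Theorem \ref{thmdposi}(i) to conclude that $d_k = (d_k)^\circ$ depends only on $x_{i;t}^\circ$ and $x_{k;t_0}^\circ$. Then, by Theorem \ref{promain}(i), the equality of the classical cluster variables $x_{i;t}^\circ$ and $x_{k;t_0}^\circ$ is equivalent to the equality of the corresponding generalized cluster variables $x_{i;t}$ and $x_{k;t_0}$. So the value of $d_k$ is determined by the pair $(x_{i;t}, x_{k;t_0})$ alone, not by the choice of cluster containing $x_{k;t_0}$.

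For part (ii), the trichotomy translates case by case. The first case $d_k = -1 \iff x_{i;t}^\circ = x_{k;t_0}^\circ$ becomes $d_k = -1 \iff x_{i;t} = x_{k;t_0}$ via Theorem \ref{promain}(i). The second case, existence of a cluster $\mathbf{x}_{t'}^\circ$ containing both $x_{i;t}^\circ$ and $x_{k;t_0}^\circ$, translates using Theorem \ref{promain}(ii), which says the bijection $\alpha$ sends clusters to clusters; so a cluster $\mathbf{x}_{t'}$ of $\mathcal{A}(\mathcal{S})$ contains $x_{i;t}$ and $x_{k;t_0}$ if and only if $\mathbf{x}_{t'}^\circ$ contains $x_{i;t}^\circ$ and $x_{k;t_0}^\circ$. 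The third case is the complementary one. Since the numerical values of the entries $d_k$ are identical in both algebras, the signs and positivity conditions match automatically.

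No step here looks difficult once the machinery is in place: Corollary \ref{cordmat} handles the numerical transfer and Theorem \ref{promain} handles the combinatorial transfer. The only mild subtlety is making sure to state clearly that the initial cluster variables $x_{k;t_0}$ correspond to $x_{k;t_0}^\circ$ under $\alpha$ (which is immediate from the definition $\alpha(x_{i;t}) = \overline{x}_{i;t}$), so that ``$x_{i;t} = x_{k;t_0}$'' and ``$x_{i;t}^\circ = x_{k;t_0}^\circ$'' are truly the corresponding statements under the bijection. No further computation is needed.
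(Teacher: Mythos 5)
Your proposal is correct and follows essentially the same route as the paper: pass to the classical cluster algebra with initial exchange matrix $B_{t_0}R$, identify the $d$-vectors via Corollary \ref{cordmat}, and transfer Theorem \ref{thmdposi} back through the bijection of Theorem \ref{promain}. Your explicit appeal to Theorem \ref{promain}(i) for the variable-equality cases is a slightly more careful bookkeeping of the same argument, not a different one.
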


\begin{proof}

Let $\mathcal A(\mathcal S^\circ)$ be an $(I_n,\phi)$-cluster algebra (i.e. classic cluster algebra) with initial seed $(B_{t_0}^\circ,{\bf x}_{t_0}^\circ,{\bf y}_{t_0}^\circ)$, where $B_{t_0}^\circ=B_{t_0}R$. By Corollary \ref{cordmat}, we know that ${\bf d}_{i;t}=({\bf d}_{i;t}^{t_0})^\circ$ is also the $d$-vector of $x_{i;t}^\circ$ with respect to ${\bf x}_{t_0}$.

(i) By Theorem \ref{thmdposi} (i), $d_k$ depends only on $x_{i;t}^\circ$ and $x_{k;t_0}^\circ$, not on the clusters containing $x_{k;t_0}^\circ$. Then by Theorem \ref{promain} (ii), we can get $d_k$ depends only on $x_{i;t}$ and $x_{k;t_0}$, not on the clusters containing $x_{k;t_0}$.

(ii) By Theorem \ref{thmdposi} (ii), we know that  $d_k\geq -1$ for $k=1,\cdots,n$, and in details,
\begin{eqnarray}
d_k=\begin{cases} -1~,& \text{iff}\;\; x_{i,t}^\circ=x_{k,t_0}^\circ;\\ 0~,& \text{iff}\;\;x_{i,t}^\circ\not=x_{k,t_0}^\circ\;\;\text{and}\;\; x_{i,t}^\circ, x_{k,t_0}^\circ\in {\bf x}_{t^\prime}^\circ \; \text{for some}\; t^\prime;\\ \text{a positive integer}~, & \text{iff}\;\; \text{there exists no cluster } {\bf x}_{t^\prime}^\circ \text{ containing both }x_{i,t}^\circ \text{ and }x_{k,t_0}^\circ .\end{cases}\nonumber
\end{eqnarray}
Then by  Theorem \ref{promain} (ii), we can get
 \begin{eqnarray}
d_k=\begin{cases} -1~,& \text{iff}\;\; x_{i,t}=x_{k,t_0};\\ 0~,& \text{iff}\;\;x_{i,t}\not=x_{k,t_0}\;\;\text{and}\;\; x_{i,t}, x_{k,t_0}\in {\bf x}_{t^\prime} \; \text{for some}\; t^\prime;\\ \text{a positive integer}~, & \text{iff}\;\; \text{there exists no cluster } {\bf x}_{t^\prime} \text{ containing both }x_{i,t} \text{ and }x_{k,t_0} .\end{cases}\nonumber
\end{eqnarray}
\end{proof}

Let $\mathcal A(\mathcal S)$ be an $(R,{\bf z})$-cluster algebra, and $\mathcal X(\mathcal S)$ be the set of cluster variables of $\mathcal A(\mathcal S)$. Two cluster variables $x$ and $w$ are said to be {\bf compatible}, if there exists a cluster ${\bf x}_t$ of $\mathcal A(\mathcal S)$ containing both $x$ and $w$. A subset $M\subseteq \mathcal X(\mathcal S)$ is called a {\bf compatible set} of $\mathcal A(\mathcal S)$,  if $x$ and $w$ are compatible for any $x,w\in M$.

The theorem is a conjecture in \cite[Conjecture 5.5]{FST} by Fomin,  Shapiro and  Thurston, which has been proved by the authors in \cite{CL1} for classic cluster algebras.

\begin{lemma}\cite[Theorem 7.4]{CL1}\label{lemcompatible}
Let $\mathcal A(\mathcal S)$ be an $(I_n,\phi)$-cluster algebra (i.e. classic cluster algebra), and $\mathcal X(\mathcal S)$ be the set of cluster variables of $\mathcal A(\mathcal S)$. Then

(i) a subset $M\subseteq \mathcal X(\mathcal S)$ is a compatible set of $\mathcal A(\mathcal S)$ if and only if $M$ is a subset of some cluster (as a set) of $\mathcal A(\mathcal S)$;

(ii) a subset $M\subseteq \mathcal X(\mathcal S)$ is a maximal compatible set of $\mathcal A(\mathcal S)$ if and only if $M$ is a cluster (as a set) of $\mathcal A(\mathcal S)$.
\end{lemma}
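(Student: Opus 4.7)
The overall plan is to derive (ii) directly from (i)—a cluster is itself a compatible set of size $n$, and (i) forces any compatible set to have size at most $n$, so maximal compatible sets coincide with clusters—and then to concentrate on (i). The ``if'' direction of (i) is immediate from the definition of compatibility, so the core task is the ``only if'' direction, which I would prove by induction on $|M|$, with trivial base cases $|M|\leq 1$.

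For the inductive step, write $M=M'\cup\{y\}$ with $|M'|=r-1$, and by induction fix a cluster ${\bf x}_t$ containing $M'$; say $M'=\{x_{j_1;t},\ldots,x_{j_{r-1};t}\}$ and set $J:=\{1,\ldots,n\}\setminus\{j_1,\ldots,j_{r-1}\}$. By Theorem \ref{thmconnect}, the set of clusters containing $M'$ is a connected subgraph of ${\bf EG}(\mathcal A(\mathcal S))$ reachable from ${\bf x}_t$ via $J$-sequences of mutations, and I would search for a cluster containing $y$ inside this subgraph. The guiding invariant is the $d$-vector ${\bf d}_y^{{\bf x}_u}=(d_1^u,\ldots,d_n^u)^{\rm T}$ at each such cluster ${\bf x}_u$: because $y$ is compatible with each $y_i=x_{j_i;u}$, Theorem \ref{thmdposi}(ii) forces $d_{j_i}^u\in\{-1,0\}$ for every $i$; if some $d_{j_i}^u=-1$ then $y=y_i\in M'$ and we are done, so we may assume every $d_{j_i}^u=0$, whence every positive entry of ${\bf d}_y^{{\bf x}_u}$ sits at an index in $J$. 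I would then run an auxiliary induction on $\omega({\bf x}_u):=\sum_{k\in J}[d_k^u]_+$. In the base case $\omega({\bf x}_u)=0$, every $d_k^u$ lies in $\{-1,0\}$, and by Theorem \ref{thmdposi}(ii) this forces $y\in{\bf x}_u$, giving the desired cluster. In the inductive step, pick any $k\in J$ with $d_k^u>0$, perform the mutation $\mu_k$ to obtain a cluster ${\bf x}_{u'}\supseteq M'$, and verify that $\omega({\bf x}_{u'})<\omega({\bf x}_u)$, closing the induction.

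The principal obstacle is the strict monotonicity $\omega({\bf x}_{u'})<\omega({\bf x}_u)$ at each mutation step: although it is intuitive that mutating in a direction where $y$ has a strictly positive $d$-vector entry brings ${\bf x}_u$ closer to hosting $y$, rigorously establishing it requires a careful manipulation of the $d$-vector mutation rule (Proposition \ref{prodmut}, specialized to $r_k=1$ in the classic case), combined with the preserved property that the entries at positions $j_1,\ldots,j_{r-1}$ remain zero throughout the process. A secondary subtle point is the conclusion in the base case $\omega({\bf x}_u)=0$ that at least one $-1$ entry must actually occur—equivalently, that every cluster variable outside ${\bf x}_u$ has at least one strictly positive component in its $d$-vector with respect to ${\bf x}_u$. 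This stronger positivity is not an explicit output of Theorem \ref{thmdposi}(ii) and would need either a direct argument via the Laurent phenomenon or an independent cardinality bound ruling out pairwise compatible sets of size $n+1$.
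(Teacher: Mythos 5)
First, note that the paper does not prove this lemma at all: it is quoted verbatim from \cite[Theorem 7.4]{CL1} and used as a black box (the surrounding text records that the statement originates as \cite[Conjecture 5.5]{FST}). So there is no internal proof to compare yours against; what follows is an assessment of your sketch on its own terms.

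Your reductions are fine --- (ii) from (i), and the ``if'' direction of (i) from the definition --- but the two ``obstacles'' you flag at the end are not technical loose ends: together they are essentially the whole theorem, and neither is available from the results quoted in this paper. For the base case, the assertion that a cluster variable $y\notin{\bf x}_u$ cannot have the zero $d$-vector with respect to ${\bf x}_u$ is, by Theorem \ref{thmdposi}(ii), exactly the assertion that $\{y\}\cup{\bf x}_u$ is not a compatible set of size $n+1$ --- a special case of statement (i) itself, so invoking it is circular unless you supply an independent argument. For the inductive step, the monotonicity $\omega({\bf x}_{u'})<\omega({\bf x}_u)$ is unsubstantiated: Proposition \ref{prodmut} governs how the columns of $D_t^{t_0}$ change when the \emph{subscript} (the cluster being expanded) mutates, whereas you need the recurrence for the $d$-vector of a \emph{fixed} $y$ as the \emph{reference} cluster mutates, and these are not the same recursion. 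What one can extract from Theorem \ref{thmdposi}(i) is that mutating ${\bf x}_u$ at $k$ changes only the $k$-th entry, and that the new entry is the ``pairing'' of $y$ with the new variable $x_{k;u'}$ --- a quantity depending only on that pair of cluster variables, with no a priori reason to be smaller than $d_k^u$ (your $A_2$-type intuition does not survive in higher rank, where $d$-vector entries can grow under mutation in a positive direction). The actual proof in \cite{CL1} does not run a $d$-vector descent; it goes through the $g$-vector machinery (the ``enough $g$-pairs property''), which is where the real content lies. As written, your argument is a plausible plan whose two critical steps are unproved.
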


\begin{theorem}\label{thmlast}
Let $\mathcal A(\mathcal S)$ be an $(R,{\bf z})$-cluster algebra with initial seed at $t_0$, and $\mathcal X(\mathcal S)$ be the set of cluster variables of $\mathcal A(\mathcal S)$. Then

(i) a subset $M\subseteq \mathcal X(\mathcal S)$ is a compatible set of $\mathcal A(\mathcal S)$ if and only if $M$ is a subset of some cluster (as a set) of $\mathcal A(\mathcal S)$;

(ii) a subset $M\subseteq \mathcal X(\mathcal S)$ is a maximal compatible set of $\mathcal A(\mathcal S)$ if and only if $M$ is a cluster (as a set) of $\mathcal A(\mathcal S)$.
\end{theorem}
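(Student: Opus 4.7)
The plan is to reduce Theorem \ref{thmlast} to its already-established classical counterpart, Lemma \ref{lemcompatible}, using the cluster-preserving bijection supplied by Theorem \ref{promain}. I would introduce an auxiliary classical cluster algebra $\mathcal A(\mathcal S^\circ)$ (that is, an $(I_n,\phi)$-cluster algebra) whose initial exchange matrix is $B_{t_0}^\circ=B_{t_0}R$. Since $B_{t_0}R=B_{t_0}^\circ\cdot I_n$, the hypothesis of Theorem \ref{promain} is satisfied for the pair $\mathcal A(\mathcal S)$ and $\mathcal A(\mathcal S^\circ)$, yielding a bijection $\alpha:\mathcal X(\mathcal S)\to\mathcal X(\mathcal S^\circ)$ defined by $\alpha(x_{i;t})=x_{i;t}^\circ$ which restricts to a bijection between the sets of clusters (viewed as subsets of cluster variables) of the two algebras.

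The key observation is that the notion of compatibility is defined purely in terms of two cluster variables sharing a common cluster. Theorem \ref{promain}(i)--(ii) tells me precisely that the co-occurrence relation is preserved under $\alpha$: for any $x,w\in\mathcal X(\mathcal S)$, there is a cluster of $\mathcal A(\mathcal S)$ containing both $x$ and $w$ if and only if there is a cluster of $\mathcal A(\mathcal S^\circ)$ containing both $\alpha(x)$ and $\alpha(w)$. Consequently, a subset $M\subseteq\mathcal X(\mathcal S)$ is compatible if and only if $\alpha(M)\subseteq\mathcal X(\mathcal S^\circ)$ is compatible, and $M$ coincides with (resp.\ is contained in) a cluster of $\mathcal A(\mathcal S)$ if and only if $\alpha(M)$ coincides with (resp.\ is contained in) a cluster of $\mathcal A(\mathcal S^\circ)$.

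With these two transport equivalences in hand, both statements follow immediately from Lemma \ref{lemcompatible}. For (i): $M$ is compatible iff $\alpha(M)$ is compatible iff $\alpha(M)\subseteq\overline{\bf x}_t^\circ$ for some cluster of $\mathcal A(\mathcal S^\circ)$ (by Lemma \ref{lemcompatible}(i)) iff $M\subseteq{\bf x}_t$; and the converse direction is trivial since any subset of a cluster is pairwise compatible by definition. For (ii): since $\alpha$ is a bijection on cluster variables preserving the compatibility relation, $M$ is a maximal compatible set of $\mathcal A(\mathcal S)$ iff $\alpha(M)$ is a maximal compatible set of $\mathcal A(\mathcal S^\circ)$ iff $\alpha(M)$ is a cluster (by Lemma \ref{lemcompatible}(ii)) iff $M$ is a cluster of $\mathcal A(\mathcal S)$.

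I expect no serious obstacle here, as the substantive content is already packaged in Theorem \ref{promain} and Lemma \ref{lemcompatible}; the argument is a routine transport of structure along $\alpha$. The only point that warrants explicit mention is that $\alpha$ preserves not merely individual clusters but also the relation ``belong to a common cluster'', and this is exactly what Theorem \ref{promain}(ii) delivers once one notes that the bijection on clusters it induces is set-theoretic.
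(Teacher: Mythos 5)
Your proposal is correct and follows essentially the same route as the paper: introduce the classical companion algebra $\mathcal A(\mathcal S^\circ)$ with $B_{t_0}^\circ=B_{t_0}R$, transport compatibility and cluster-membership along the bijection $\alpha$ of Theorem \ref{promain}(ii), and invoke Lemma \ref{lemcompatible}. No gaps; the paper's proof is the same chain of equivalences, with part (ii) likewise deduced from part (i).
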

\begin{proof}
(i) Let $\mathcal A(\mathcal S^\circ)$ be an $(I_n,\phi)$-cluster algebra (i.e. classic cluster algebra) with initial seed $(B_{t_0}^\circ,{\bf x}_{t_0}^\circ,{\bf y}_{t_0}^\circ)$, where $B_{t_0}^\circ=B_{t_0}R$. Let $\mathcal X(\mathcal S^\circ)$ be the set of cluster variables of $\mathcal A(\mathcal S^\circ)$.
Let $\alpha:\mathcal X(\mathcal S)\rightarrow \mathcal X(\mathcal S^\circ)$ be the bijection given in Theorem \ref{promain} (ii).

By Theorem \ref{promain} (ii), we know that a subset $M\subseteq \mathcal X(\mathcal S)$ is a compatible set of $\mathcal A(\mathcal S)$ if and only if
$M^\circ:=\alpha(M)\subseteq \mathcal X(\mathcal S^\circ)$ is a compatible set of $\mathcal A(\mathcal S^\circ)$. By Lemma \ref{lemcompatible}, $M^\circ$ is a compatible set of $\mathcal A(\mathcal S^\circ)$ if and only if $M^\circ$ is a subset of some cluster ${\bf x}_t^\circ$ (as a set) of $\mathcal A(\mathcal S^\circ)$. By Theorem \ref{promain} (ii), $M^\circ$ is a subset of  ${\bf x}_t^\circ$ (as a set) of $\mathcal A(\mathcal S^\circ)$ if and only if $M$ is a subset of ${\bf x}_t$ (as a set) of $\mathcal A(\mathcal S)$. Hence, we obtain that a subset $M\subseteq \mathcal X(\mathcal S)$ is a compatible set of $\mathcal A(\mathcal S)$ if and only if $M$ is a subset of some cluster (as a set) of $\mathcal A(\mathcal S)$.

(ii) follows from (i).
\end{proof}

{\bf Acknowledgements:}\; The authors are grateful to M. Shapiro and M. Gekhtman for inspiring discussions during the ``Cluster Algebras 2019" (June 3-June 21, 2019) at RIMS, Kyoto University.

\end{document}